\def\hP{{\hat P}}
\def\h0{{\hat 0}}
\def\Gr{{\rm Gr}}
\def\M{{\mathcal{M}}}
\def\N{{\mathcal{N}}}
\def\R{{\mathbb{R}}}
\def\env{{\rm env}}
\def\rank{{\rm rank}}
\def\sp{{\rm span}}
\def\oPi{{\mathring{\Pi}}}
\def\ker{{\rm ker}}
\def\det{{\rm det}}
\def\GL{{\rm GL}}
\def\codim{{\rm codim}}
\def\I{{\mathcal{I}}}
\newtheorem{theorem}{Theorem}
\newtheorem{proposition}[theorem]{Proposition}
\newtheorem{lemma}[theorem]{Lemma}
\newtheorem{corollary}[theorem]{Corollary}
\newtheorem{conjecture}[theorem]{Conjecture}
\newtheorem{definition}[theorem]{Definition}
\newtheorem{example}[theorem]{Example}
\newtheorem{remark}[theorem]{Remark}
\begin{document}
\title{On the face stratification of the $m=2$ amplituhedron}
\author{Thomas Lam}
\address{Department of Mathematics, University of Michigan, 2074 East Hall, 530 Church Street, Ann Arbor, MI 48109-1043, USA}
\email{\href{mailto:tfylam@umich.edu}{tfylam@umich.edu}}
\begin{abstract}
We define and study the face stratification of the $m=2$ amplituhedron.  We show that the face poset is an upper order ideal in the face poset of the totally nonnegative Grassmannian.  Our construction is consistent with earlier work of Lukowski, and we confirm various predictions of Lukowski.
\end{abstract}
\maketitle

\section{Introduction}
Let $\Gr(k,n)_{\geq 0}$ denote the totally nonnegative Grassmannian \cite{Pos,Lus}, the subspace of the Grassmannian $\Gr(k,n)$ of $k$-planes in $\R^n$ with nonnegative Pl\"ucker coordinates.  The Grassmannian has a positroid stratification \cite{KLS}
$
\Gr(k,n) = \bigsqcup_\M \oPi_\M
$
into open positroid varieties indexed by rank $k$ positroids $\M$.  Intersecting this stratification with $\Gr(k,n)_{\geq 0}$ we obtain the face stratification $\Gr(k,n)_{\geq 0} = \bigsqcup_\M \Pi_{\M,>0}$ into positroid cells \cite{Pos}.  This endows $\Gr(k,n)_{\geq 0}$ with the structure of a regular CW-complex homeomorphic to a closed ball \cite{GKL1,GKL3}.  

For $m \leq n-k$, the amplituhedron $A_{n,k,m}$ is the image of $\Gr(k,n)_{\geq 0}$ under a linear projection $Z: \Gr(k,n) \to \Gr(k,k+m)$ where $Z$ is represented by a $n \times (k+m)$ matrix with positive $(k+m) \times (k+m)$ minors.  When $k=1$, $A_{n,1,m}$ is the familiar cyclic polytope.  The amplituhedron was defined by Arkani-Hamed and Trnka \cite{AT} to study and construct super Yang-Mills amplitudes.  While the case $m=4$ is of the most physical significance, the $m=2$ case is an important toy model which has been the subject of much recent work \cite{BH,LPW,Luk,PSBW,LPSV,RST}.  The $m=2$ amplituhedron is the focus of this paper. 

We define (\cref{def:face}) the face stratification of $A_{n,k,2}$ as the intersection of $A_{n,k,2}$ with the positroid stratification of $\Gr(2,n)$ under the \emph{twistor embedding} of $A_{n,k,2}$ into $\Gr(2,n)$.  This definition is directly analogous to Postnikov's definition of the face stratification of $\Gr(k,n)_{\geq 0}$.  We further show in \cref{thm:semi} that our face stratification agrees with the stratification of $A_{n,k,2}$ as a semialgebraic set, or as a (conjectural) positive geometry.  

In \cref{thm:main}, we give a complete description of the face poset $P_{n,k}$ of $A_{n,k,2}$ as an upper order ideal in the face poset $Q_{n,2}$ of $\Gr(2,n)_{\geq 0}$.  As a key combinatorial tool we define a \emph{twistor map} on the level of matroids (\cref{sec:twistor}).  

Lukowski \cite{Luk} studied the boundaries of $A_{n,k,2}$ in a recursive and computational way.  As we explain in \cref{sec:Luk}, our approach is consistent with his.  In particular, we confirm conjectures from \cite{Luk}, for the rank generating function of $P_{n,k}$ (\cref{thm:corank}) and that the face poset is Eulerian (\cref{thm:Eulerian}).  Our results are also consistent with the recent work of Ranestad, Sinn, and Telen \cite{RST} who studied the case $k=m=2$; see \cref{rem:RST}.

Denote $[n]:=\{1,2,\ldots,n\}$ and let $\binom{[n]}{k}$ denote the $k$-element subsets of $[n]$.  Let $\leq_a$ be the total order on $[n]$ that is the cyclic rotation of the usual order, but with minimum $a$.  For $k$-element subsets $I = \{i_1 <_a i_2 <_a \cdots <_a i_k\}$ and $J = \{j_1 <_a j_2 <_a \cdots <_a j_k\}$ we have $I \leq_a J$ if and only if $i_s \leq_a j_s$ for all $s$.

\section{The amplituhedron in twistor space}
For a point $C \in \Gr(k,n)$ or a $k\times n$ matrix, we let $\Delta_I(C)$ denote Pl\"ucker coordinate labeled by the subset $I \in \binom{[n]}{k}$.  If $I =  \{i_1 < \cdots < i_a\}, J = \{j_1 < \cdots < j_b\}$ are two subsets of $[n]$ with $a+b=k$, we let $\Delta_{IJ}(C)$ be the signed Pl\"ucker coordinate: it is 0 if $I \cap J \neq \emptyset$; otherwise it is equal to $\pm \Delta_{I \cup J}(C)$, the determinant of the $k \times k$ matrix whose columns are given by the columns $c_{i_1},\ldots,c_{i_a},c_{j_1},\ldots,c_{j_b}$ of $C$, taken in this order. 

Let $1 \leq k \leq n$ and $m \leq n-k$ be even.  Let $Z$ be a $n \times (k+m)$ \emph{positive} matrix, that is, the $(k+m) \times (k+m)$ minors of $Z$ are positive.  We denote by $\sp(Z) \in \Gr(k+m,n)$ the column span of $Z$.
We define the \emph{amplituhedron (in twistor space)} $A_{n,k,m} = A_{n,k,m}(Z)$ as the image of $\Gr(k,n)_{\geq 0}$ under the twistor map
$$
\varphi_m: \Gr(k,n) \to \Gr(m,n)
$$
given by
$$
\Delta_I(\varphi_m(C)) =  \sum_{J \in \binom{[n]}{k}} \Delta_J(C) \Delta_{J I}(Z) =: \langle I \rangle.
$$
The Pl\"ucker coordinate $\langle I \rangle$ is called a twistor coordinate of $C$.  The amplituhedron depends on the choice of $Z$, but for simplicity the dependence on $Z$ is often suppressed from our notation.  Note that $\varphi_m$ is only a rational map.  It is not defined on $V \in \Gr(k,n)$ exactly when $\dim(V \cap \ker(Z)) \geq 1$.  The amplituhedron in twistor space was called the ``B-amplituhedron" in  \cite{KWZ} and shown to be isomorphic to the amplituhedron of \cite{AT} there.

The twistor map $\varphi_m$ is the composition of the linear projection map 
$$
Z: \Gr(k,n) \to \Gr(k,k+m), \qquad V \mapsto Z(V)
$$
with the map $\psi: \Gr(k,k+m) \to \Gr(m,n)$ given by
$$
\Delta_I(\psi(Y)) = \det(YZ_{i_1}Z_{i_2}\cdots Z_{i_m})  \qquad \text{ for } Y \in \Gr(k,k+m),
$$
where for an $m$-element subset $I = \{i_1,\ldots,i_m\}$, the notation $\det(YZ_{i_1}Z_{i_2}\cdots Z_{i_m})$ denotes the $(k+m) \times (k+m)$ determinant obtained by concatenating the $k \times (k+m)$ matrix $Y$ with the $m$ row vectors $Z_{i_1},\ldots,Z_{i_m} \in \R^{k+m}$.  See for example \cite[Section 18]{LamCDM} and \cite[Section 3.1]{PSBW}.

\begin{lemma}\label{lem:twistor}
For $Z$ full rank, the map $\psi: \Gr(k,k+m) \to \Gr(m,n)$ is an embedding.
\end{lemma}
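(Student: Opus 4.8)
The plan is to exhibit $\psi$ as the composition of the Pl\"ucker embedding of $\Gr(k,k+m)$ with a \emph{linear} embedding of projective spaces. The linearity is the point: Laplace-expanding the determinant $\det(Y Z_{i_1}\cdots Z_{i_m})$ along the block of $k$ rows coming from $Y$ shows that each twistor coordinate $\langle I\rangle$ is a linear function of the Pl\"ucker coordinates of $Y$, with coefficients built from the minors of $Z$.

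To set this up cleanly I would fix the standard top form $\mu\in\Lambda^{k+m}(\R^{k+m})^{\ast}$. Contraction with $\mu$ is a linear isomorphism $\Lambda^{k}\R^{k+m}\to\Lambda^{m}(\R^{k+m})^{\ast}$ carrying the Grassmann cone of decomposable $k$-vectors onto the Grassmann cone of decomposable $m$-covectors; for $Y\in\Gr(k,k+m)$ with rows $y_1,\dots,y_k$ it sends $y_1\wedge\cdots\wedge y_k$ to the covector $\omega_Y$ determined by $\omega_Y(w_1,\dots,w_m)=\det(y_1,\dots,y_k,w_1,\dots,w_m)$, so $Y\mapsto[\omega_Y]$ is just the Pl\"ucker embedding in these coordinates. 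Next let $Z^{\mathsf T}:\R^{n}\to\R^{k+m}$ be the transpose of the matrix $Z$, so $Z^{\mathsf T}e_i=Z_i$ is the $i$-th row of $Z$; it induces the pullback map on $m$-covectors, $\Lambda^{m}(Z^{\mathsf T})^{\ast}:\Lambda^{m}(\R^{k+m})^{\ast}\to\Lambda^{m}(\R^{n})^{\ast}$. Comparing definitions, $\bigl(\Lambda^{m}(Z^{\mathsf T})^{\ast}\omega_Y\bigr)(e_{i_1},\dots,e_{i_m})=\omega_Y(Z_{i_1},\dots,Z_{i_m})=\det(Y Z_{i_1}\cdots Z_{i_m})=\langle I\rangle$. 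Hence, under the standard identification of $\Gr(m,n)$ with the decomposable locus in $\mathbb{P}\bigl(\Lambda^m(\R^n)^{\ast}\bigr)$, the map $\psi$ is exactly $\mathbb{P}\bigl(\Lambda^{m}(Z^{\mathsf T})^{\ast}\bigr)$ precomposed with the Pl\"ucker embedding $Y\mapsto[\omega_Y]$.

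Now I would invoke the hypothesis: since $Z$ has rank $k+m$, the map $Z^{\mathsf T}$ is surjective, so its dual $(Z^{\mathsf T})^{\ast}:(\R^{k+m})^{\ast}\hookrightarrow(\R^n)^{\ast}$ is injective, and therefore $\Lambda^{m}(Z^{\mathsf T})^{\ast}$ is injective (apply $\Lambda^m$ to a one-sided inverse). An injective linear map has empty base locus, so $\mathbb{P}\bigl(\Lambda^{m}(Z^{\mathsf T})^{\ast}\bigr)$ is an everywhere-defined closed embedding of projective spaces; in particular the twistor coordinates of any $Y$ never all vanish, so $\psi$ is genuinely defined on all of $\Gr(k,k+m)$, and since $\Lambda^m$ of a linear map preserves decomposability the image lands in $\Gr(m,n)$. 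Being the composition of two closed embeddings, the Pl\"ucker embedding and $\mathbb{P}\bigl(\Lambda^{m}(Z^{\mathsf T})^{\ast}\bigr)$, the map $\psi$ is a closed embedding, as claimed.

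I expect the only real friction to be the middle-paragraph bookkeeping: keeping straight which space carries which exterior power, the sign conventions buried in $\langle I\rangle$ and in the identification $\Gr(m,n)\subseteq\mathbb{P}(\Lambda^m(\R^n)^{\ast})$, and the fact that the volume-form-contracted coordinates still give the Pl\"ucker embedding. Once that is in place, the substantive content -- that full rank of $Z$ makes the relevant linear map injective -- is immediate, and there is no further obstacle. As an alternative that sidesteps exterior algebra: $\psi$ is manifestly continuous and its source $\Gr(k,k+m)$ is compact, so it suffices to prove injectivity, and choosing $k+m$ rows of $Z$ forming a basis of $\R^{k+m}$ exhibits a subcollection of twistor coordinates of $Y$ which, after that change of basis, are the Pl\"ucker coordinates of $Y$ and hence determine it; this yields a topological embedding, though not by itself an immersion.
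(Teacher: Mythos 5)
Your proof is correct but takes a different route from the paper's. The paper normalizes $Z$ using the $\GL(n)$-action (simultaneous on $Z$ and on $\Gr(m,n)$) so that the top $(k+m)\times(k+m)$ block of $Z$ is the identity and the remaining rows are zero; it then observes that for $I\subseteq[k+m]$ the twistor coordinate $\langle I\rangle$ equals $\pm\Delta_{[k+m]\setminus I}(Y)$, so a subcollection of twistor coordinates already recovers $Y$ via its Pl\"ucker coordinates. You instead exhibit $\psi$ coordinate-freely as the Pl\"ucker embedding $Y\mapsto[\omega_Y]\in\mathbb{P}(\Lambda^m(\R^{k+m})^*)$ followed by the linear map $\mathbb{P}(\Lambda^m(Z^{\mathsf T})^*)$ into $\mathbb{P}(\Lambda^m(\R^n)^*)$, which is a closed embedding precisely because $Z$ has full rank, so $(Z^{\mathsf T})^*$ and hence $\Lambda^m(Z^{\mathsf T})^*$ are injective. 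The paper's argument is shorter and keeps everything in Pl\"ucker coordinates, but as written it directly gives only injectivity; upgrading to ``embedding'' requires a further (routine) step. Your factorization makes the linearity of the twistor coordinates in the Pl\"ucker coordinates of $Y$ structural, and delivers a closed embedding immediately as a composite of closed embeddings, with the only input being that $Z^{\mathsf T}$ has a one-sided inverse. Your closing ``alternative'' is essentially the paper's argument, and your caveat there---that picking out $k+m$ independent rows of $Z$ gives injectivity and hence a topological embedding on a compact source, but not directly an immersion---is a fair reading of the gap the paper leaves implicit.
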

\begin{proof}
The group $\GL(n)$ acts simultaneously on $Z$ and on $\Gr(m,n)$.  Using it, we may assume that $Z$ is the identity matrix in the first $(k+m)$ rows.  That is, $Z_i = e_i \in \R^n$ for $i=1,2,\ldots,k+m$ and $Z_i = 0$ for $i = k+m-1,\ldots,n$.  In this case, for $I \in \binom{[k+m]}{m}$, the twistor coordinate $\Delta_I(\psi(Y))$ is, up to a predictable sign, the usual Pl\"ucker coordinate $\Delta_{[k+m]\setminus I}(Y)$.
\end{proof}

It follows from Lemma \ref{lem:twistor} that the amplituhedron in twistor space is isomorphic to the usual amplituhedron, defined as the image $Z(\Gr(k,n)_{\geq 0})$ inside $\Gr(k,k+m)$.  

Arkani-Hamed, Thomas, and Trnka \cite{ATT} gave a conjectural description of $A_{n,k,m}$ using inequalities of twistor coordinates, and a topological condition on sign-flips of twistor coordinates.  For $m=2$, this description, together with a comprehensive description of triangulations, is available due to the works of Bao--He, Karp--Williams, Lukowski--Parisi--Williams, and Parisi--Sherman-Bennet--Williams \cite{BH,KW,LPW,PSBW}.

The complement of the $n$ divisors $\{\langle a (a+1) \rangle\ = 0\}$ in $\Gr(2,n)$ is the top open positroid variety $\oPi(2,n)$ which is a cluster variety of type $A_{n-3}$.  The following result is a special case of upcoming work of Galashin and Lam studying connected components of real cluster varieties.  Let $\oPi_{+}$ be the union of those connected components of $\oPi(2,n)$ that satisfy $\langle 12 \rangle >0, \langle 23 \rangle >0, \ldots, \langle (n-1)n \rangle >0$.

\begin{proposition}[\cite{GLconncomp}]\label{prop:GL}
The space $\oPi_+$ has $n-1$ connected components.  The $k$-th connected component $\oPi_{+,k}$ is the closure of the locus in $\oPi_+$ where the sequence $\langle 12 \rangle, \langle 13 \rangle, \ldots, \langle 1n \rangle$ has $k$ sign-flips (and no zeroes).
\end{proposition}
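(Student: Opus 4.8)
The plan is to work inside the cluster variety $\oPi(2,n)$ of type $A_{n-3}$ and identify $\oPi_+$ concretely. First I would fix the standard cluster structure: the Plücker coordinates $\langle a(a+1)\rangle$ for $a \in [n]$ are the frozen variables, and a seed is given by the ``fan triangulation'' whose mutable cluster variables are $\langle 1j\rangle$ for $j = 3,4,\ldots,n-1$. On $\oPi(2,n)$ all frozen variables are nonvanishing, and on $\oPi_+$ they are moreover all positive; thus $\oPi_+$ is cut out inside $\oPi(2,n)$ by the sign conditions on the frozens alone, and its connected components are governed by the possible sign patterns of the mutable variables $\langle 1j\rangle$ that can be realized, together with which of those sign strata are connected to one another. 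Since a point of $\oPi(2,n)$ lying over a fixed sign vector of the mutable coordinates $(\langle 13\rangle,\ldots,\langle 1(n-1)\rangle) \in (\R^\times)^{n-3}$ is determined by those coordinates (the fan-triangulation cluster is a coordinate chart, an open torus $(\R^\times)^{n-3}$ embedded in $\oPi(2,n)$), each nonvanishing sign vector gives a single connected piece inside that chart.

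Next I would count which sign vectors occur. The sequence $\langle 12\rangle, \langle 13\rangle, \ldots, \langle 1n\rangle$ has $\langle 12\rangle>0$ and $\langle 1n\rangle = \langle 1n\rangle$; in fact $\langle 1n\rangle = \pm\langle (n-1)n\rangle$-type relations do not directly pin its sign, but the three-term Plücker relations among $\langle 1j\rangle, \langle 1(j+1)\rangle, \langle j(j+1)\rangle$ and the positivity of all frozens constrain the sequence. The key point is that, given the signs of the frozens are all $+$, there is no constraint forcing consecutive $\langle 1j\rangle$ to agree or differ — every sign pattern of $(\langle 13\rangle,\ldots,\langle 1(n-1)\rangle)$ is achievable — so the relevant invariant is simply the number $k$ of sign changes in the length-$n$ sequence $\langle 12\rangle = +, \langle 13\rangle, \ldots, \langle 1n\rangle$, which ranges over $0,1,\ldots,n-2$... but one checks that the sign of $\langle 1n\rangle$ is not free: using a Plücker relation expressing $\langle 1n\rangle$ via $\langle 12\rangle$ and the other frozens shows $\langle 1n\rangle$ and $\langle 12\rangle$ have a fixed relative sign, cutting the count to $k \in \{0,1,\ldots,n-2\}$ with one endpoint excluded, i.e. $n-1$ values. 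I then identify $\oPi_{+,k}$ as the closure of the locus with exactly $k$ sign-flips and no zeroes.

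To finish, I must show (i) each such locus is connected and (ii) distinct values of $k$ give distinct components, i.e. one cannot pass continuously within $\oPi_+$ from one $k$ to another. For (i): the sign-flip-$k$ locus intersected with the fan-triangulation chart is $\{(\langle 13\rangle,\ldots,\langle 1(n-1)\rangle) \in (\R^\times)^{n-3} : \text{the ambient length-}n\text{ sequence has exactly }k\text{ flips}\}$, which is a product of open half-lines, hence connected; and taking the closure within $\oPi_+$ keeps it connected. For (ii): a path in $\oPi_+$ changing $k$ would have to make some $\langle 1j\rangle$ vanish, but I need to rule out escaping through a chart where some $\langle 1j\rangle$ is not a coordinate — here I invoke that $\oPi_+$, being defined by positivity of frozens, is itself a union of cluster tori of the $A_{n-3}$ structure glued along mutations, and mutation of $\langle 1j\rangle \mapsto \langle 1j\rangle' $ never changes the sign-flip count of the frozen-positive boundary data because the exchange relation $\langle 1j\rangle\langle 1j\rangle' = \langle 1(j-1)\rangle\langle 1(j+1)\rangle + \langle 12\rangle\langle \cdots\rangle$ (a sum of monomials in frozens and neighbors) has positive right-hand side only under the appropriate sign conditions. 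The main obstacle is precisely this last point: controlling sign patterns across all mutations of the $A_{n-3}$ cluster structure, i.e. showing the sign-flip count is a mutation-invariant locally constant function on $\oPi_+$. I expect to handle it by the general machinery of the forthcoming joint work with Galashin on connected components of real cluster varieties, of which this proposition is the stated special case; concretely, one reduces to checking the claim on each exchange relation of type $A_{n-3}$, a finite and explicit verification.
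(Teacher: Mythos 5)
The paper does not prove this proposition at all: it is attributed to \cite{GLconncomp}, i.e.\ unpublished joint work of Galashin and Lam, and quoted without an internal argument. So there is no in-paper proof to compare your proposal against, and your proposal likewise ends by deferring the crux --- that the sign-flip count of $\langle 12\rangle,\ldots,\langle 1n\rangle$ is invariant under all mutations of the type $A_{n-3}$ cluster structure --- to the very same forthcoming work. As written, then, this is a plausible reduction to that unpublished result, not a self-contained alternative proof.

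Beyond that structural point, there is a concrete error in your counting. The sequence $\langle 12\rangle, \langle 13\rangle, \ldots, \langle 1n\rangle$ has $n-1$ terms, not $n$; a sequence of $n-1$ nonzero reals has between $0$ and $n-2$ sign changes, already giving exactly $n-1$ possibilities with nothing left to exclude. Your claim that a Pl\"ucker relation forces a fixed relative sign between $\langle 12\rangle$ and $\langle 1n\rangle$ is false: if it held, the parity of $k$ would be fixed, and you would get only roughly $(n-1)/2$ classes, contradicting the statement. As the paper itself remarks immediately after the proposition, the implication runs the other way --- the sign of $\langle 1n\rangle$ is \emph{determined by} the parity of $k$, it is not constrained a priori. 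You should also be more careful in step (i): the fan-triangulation torus does not cover $\oPi_+$ (it misses all points where some $\langle 1j\rangle$ vanishes), so ``each sign-flip stratum meets the chart in a connected set'' does not by itself give connectedness of the stratum in $\oPi_+$; you would need to argue that points outside the chart adhere to that connected piece, which again feeds into the mutation-invariance argument you have deferred.
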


In \cref{prop:GL}, the sign of the remaining cyclic minor $\langle 1n \rangle$ on a connected component $\oPi_{+,k}$ is determined by the parity of $k$, the number of sign-flips.  

\begin{remark}
The space $\oPi_{+,k}$ and the space $\mathcal{G}^\circ_{n,k,2}$ of \cite[Section 11.2]{PSBW} have similar definitions, and the two spaces have the same closure in $\Gr(2,n)$.  
The spaces $\oPi_{+,k}$ also appear in the work of Dian and Heslop \cite{DH}; see also \cite{EHM}.
\end{remark}

The following result follows from \cite[Proposition 11.11]{PSBW}; see also \cite{BH,ATT}.

\begin{theorem}\label{thm:sign-flip}
We have 
$$A_{n,k,2}(Z)=\overline{\oPi_{+,k} \cap \Gr(2,\sp(Z))}.$$
\end{theorem}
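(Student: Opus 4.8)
The plan is to reduce the statement to the sign-flip characterization of the $m=2$ amplituhedron and then to translate that characterization into the language of $\oPi_+$ via \cref{prop:GL}. First I would pass to the sub-Grassmannian picture: using the factorization $\varphi_2 = \psi\circ Z$ together with the $\GL(n)$-normalization from the proof of \cref{lem:twistor}, $\sp(Z)$ is the coordinate span of $e_1,\dots,e_{k+2}$ and $\psi$ is an isomorphism of $\Gr(k,k+2)$ onto the sub-Grassmannian $\Gr(2,\sp(Z)) \subseteq \Gr(2,n)$. Since $\Delta_I(\varphi_2(C)) = \langle I\rangle$ by definition, under this identification the twistor coordinates of $C \in \Gr(k,n)$ are exactly the Pl\"ucker coordinates of $\varphi_2(C)\in\Gr(2,n)$. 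For positive $Z$ the map $\varphi_2$ is everywhere defined and continuous on the compact set $\Gr(k,n)_{\geq 0}$ (this is part of the definition of the amplituhedron), so $A_{n,k,2}(Z)$ is a compact subset of $\Gr(2,\sp(Z))$ and, because $\Gr(k,n)_{>0}$ is dense in $\Gr(k,n)_{\geq 0}$, it equals the closure of $\mathcal{S}:=\varphi_2(\Gr(k,n)_{>0})$. It thus suffices to analyze $\mathcal{S}$ inside $\Gr(2,\sp(Z))$ and to compare $\overline{\mathcal{S}}$ with $\overline{\oPi_{+,k}\cap\Gr(2,\sp(Z))}$.

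The next step places $\mathcal{S}$ inside $\oPi_+$ and invokes the sign-flip theorem. For $C\in\Gr(k,n)_{>0}$ and $1\le a<a+1\le n$, in the expansion $\langle a(a+1)\rangle = \sum_K \Delta_K(C)\,\Delta_{K,\{a,a+1\}}(Z)$ a short sign check gives that each summand equals $\Delta_K(C)\,\Delta_{K\cup\{a,a+1\}}(Z)>0$ (the reordering sign is $+1$ because $a$ and $a+1$ are adjacent, and $Z$ has positive maximal minors), while a similar computation gives $\langle 1n\rangle = (-1)^k\sum_K\Delta_K(C)\,\Delta_{K\cup\{1,n\}}(Z)\ne 0$. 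Hence $\mathcal{S}\subseteq\oPi(2,n)$, and since the sign of each consecutive twistor is locally constant on $\oPi(2,n)$ we get $\mathcal{S}\subseteq\oPi_+$. The essential input is now the sign-flip characterization of $A_{n,k,2}$, namely \cite[Proposition 11.11]{PSBW} (building on \cite{BH,KW,ATT}): it identifies the locus $\{V\in\mathcal{S}:\ \langle 1j\rangle\ne 0\text{ for all }j\}$ with the set of $V\in\Gr(2,\sp(Z))$ on which every $\langle a(a+1)\rangle>0$ and the sequence $\langle 12\rangle,\langle 13\rangle,\dots,\langle 1n\rangle$ has exactly $k$ sign flips. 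By \cref{prop:GL} this set is exactly $\oPi_{+,k}^\circ\cap\Gr(2,\sp(Z))$, where $\oPi_{+,k}^\circ\subseteq\oPi_{+,k}$ denotes the dense subset of \cref{prop:GL} on which $\langle 12\rangle,\dots,\langle 1n\rangle$ has $k$ sign flips and no zeros — the same relationship noted above between $\oPi_{+,k}$ and the space $\mathcal{G}^\circ_{n,k,2}$ of \cite{PSBW} — and in particular $\mathcal{S}\subseteq\oPi_{+,k}$.

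Finally I would take closures. Combining the above, $\overline{\oPi_{+,k}^\circ\cap\Gr(2,\sp(Z))} \subseteq A_{n,k,2}(Z)=\overline{\mathcal{S}} \subseteq \overline{\oPi_{+,k}\cap\Gr(2,\sp(Z))}$, the last inclusion because $\mathcal{S}\subseteq\oPi_{+,k}\cap\Gr(2,\sp(Z))$; since $A_{n,k,2}(Z)$ is closed, the theorem reduces to the inclusion $\oPi_{+,k}\cap\Gr(2,\sp(Z))\subseteq A_{n,k,2}(Z)$. This is the step I expect to be the main obstacle: a point of $\oPi_{+,k}\cap\Gr(2,\sp(Z))$ is, by construction, a limit of points of $\oPi_{+,k}^\circ$, but those approximating points a priori leave $\Gr(2,\sp(Z))$ — they only satisfy $\langle 1j\rangle\ne 0$, a condition that cuts across $\Gr(2,\sp(Z))$ — so one cannot simply pass to the limit. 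I would resolve it using the closedness of $A_{n,k,2}(Z)$ together with the explicit boundary analysis of \cite{PSBW,BH}, which presents $A_{n,k,2}(Z)$ as a closed region cut out by inequalities and sign-flip conditions on twistor coordinates (now allowing vanishing non-consecutive twistors): from this one reads off both that every such $V\in\Gr(2,\sp(Z))$ lies in $A_{n,k,2}(Z)$ and that it is approximable within $\Gr(2,\sp(Z))$ by points of $\oPi_{+,k}^\circ\cap\Gr(2,\sp(Z))$. Granting this the two closures coincide and the theorem follows.
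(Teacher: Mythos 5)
Your proposal and the paper take essentially the same route: both reduce the theorem to the sign-flip characterization of $A_{n,k,2}$ from \cite[Proposition 11.11]{PSBW} (together with the remark preceding \cref{thm:sign-flip} relating $\oPi_{+,k}$ to the space $\mathcal{G}^\circ_{n,k,2}$ of \cite{PSBW}). The paper simply cites that result, while you unpack the citation — the factorization $\varphi_2=\psi\circ Z$, the density of $\Gr(k,n)_{>0}$, the positivity of consecutive twistors, and the matching of sign-flip counts with \cref{prop:GL} — and correctly isolate the one non-trivial closure inclusion $\oPi_{+,k}\cap\Gr(2,\sp(Z))\subseteq A_{n,k,2}(Z)$, which is indeed what the cited boundary/sign-flip analysis of \cite{PSBW,BH,ATT} supplies.
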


\begin{remark}
If we consider the family of amplituhedra $A_{n,k,m}(Z)$ as $Z$ varies over $\Gr(k+m,n)_{\geq 0}$, we obtain the universal amplituhedron \cite{GLparity}.  We believe that it is still an open problem to determine whether $\bigcup_{\text{positive } Z} A_{n,k,2}(Z)$ is dense in $\oPi_{+,k}$.  
\end{remark}

We note that $\Gr(2,n)_{> 0}$ is the connected component $\oPi_{+,0}$.  Motivated by \cref{prop:GL} and \cref{thm:sign-flip}, we study the intersection of $A_{n,k,2}$ with the positroid stratification of $\Gr(2,n)$ in \cref{def:face} below.  We begin with the matroid combinatorics of the twistor map.

\section{The twistor map on matroids}\label{sec:twistor}
Let $\M$ be a matroid of rank $k$ on $[n]$.  We will think of a matroid as a collection of bases.  For a point $C \in \Gr(k,n)$, the matroid $\M_C$ of $C$ is given by
$$
\M_C :=\left \{ I \in \binom{[n]}{k} \mid \Delta_I(C) \neq 0 \right\} \subseteq \binom{[n]}{k}.
$$
We let $\M^* := \{[n] \setminus I\mid I \in \M\}$ denote the dual matroid of rank $n-k$ on $[n]$.  A matroid $\M$ is a \emph{positroid} if it is the matroid $\M= \M_C$ of a totally nonnegative point $C \in \Gr(k,n)_{\geq 0}$ of the Grassmannian.  Positroids were completely classified in the works of Postnikov \cite{Pos} and Oh \cite{Oh}.  We describe the classification in terms of Grassmann necklaces.

A \emph{(k,n)-Grassmann necklace} $\I = (I_1,\ldots,I_n)$ is a collection of $k$-subsets $I_a \in \binom{[n]}{k}$ satisfying the conditions: (a) if $a \in I_a$ then $I_{a+1} = I_a \setminus \{a\} \cup \{a'\}$ for some $a'$, and (b) if $a \notin I_a$ then $I_{a+1} = I_a$.  (Here, the index $a$ is to be taken modulo $n$.)

Let $\M$ be a rank $k$ matroid on $[n]$, and let $I_a = I_a(\M)$ be the (lexicographically) minimal base of $\M$ with respect to $\leq_a$.  Then $\I(\M) = (I_1,\ldots,I_n)$ is a $(k,n)$-Grassmann necklace.  The map $\M \mapsto \I(\M)$ restricts to a bijection between positroids of rank $k$ on $[n]$ and $(k,n)$-Grassmann necklaces, and we let $\I \to \M_\I$ denote the inverse of this map.  We define the positroid envelope $\env(\M)$ of a matroid $\M$ to be the (unique) smallest positroid containing $\M$.  Alternatively, the envelope $\env(\M) = \M_{\I(\M)}$ is the unique positroid with the same Grassmann necklace as $\M$.  Thus positroids are the maximal elements among matroids with the same Grassmann necklace.

We deonte by $Q_{n,k}$ the poset of positroids of rank $k$ on $[n]$, ordered by inclusion.  The poset $Q_{n,k}$ is isomorphic to the dual of a lower order ideal in the Bruhat order of the affine symmetric group.  We refer the reader to \cite{Pos,Oh,KLS} for more on positroids.

\begin{definition}\label{def:twistormat} Let $m \leq n-k$.  Define the $m$-twistor matroid $\M^{\downarrow m}$ by
$$
\M^{\downarrow m} = \left\{I \in \binom{[n]}{m} \mid I \subset J \text{ for some } J \in \M^*\right\}.
$$
Alternatively, $\M^{\downarrow m}$ is the $m$-truncation of the dual matroid $\M^*$.
\end{definition}

\begin{proposition}
The twistor $\M^{\downarrow m}$ is a matroid.
\end{proposition}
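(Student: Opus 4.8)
The plan is to identify $\M^{\downarrow m}$ as the set of bases of the rank-$m$ truncation of the dual matroid $\M^*$. The first step is the observation that, for $I \in \binom{[n]}{m}$, the condition ``$I \subseteq J$ for some $J \in \M^*$'' is exactly the condition that $I$ be independent in $\M^*$: every subset of a basis is independent, and conversely every independent set of a matroid extends to a basis. Thus $\M^{\downarrow m}$ is precisely the collection of $m$-element independent sets of $\M^*$. Since the hypothesis $m \leq n-k = \rank(\M^*)$ guarantees that $\M^*$ has independent sets of every size up to $n-k$, the collection $\M^{\downarrow m}$ is nonempty (any $m$-element subset of a basis of $\M^*$ belongs to it) and all of its members have the same cardinality $m$.

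The second step is to verify the basis exchange axiom for $\M^{\downarrow m}$, which I would do directly from the independence axioms of $\M^*$. Given $B_1, B_2 \in \M^{\downarrow m}$ and $x \in B_1 \setminus B_2$, the sets $B_1 \setminus \{x\}$ and $B_2$ are both independent in $\M^*$ with $|B_1 \setminus \{x\}| = m-1 < m = |B_2|$, so the augmentation axiom in $\M^*$ produces $y \in B_2 \setminus (B_1 \setminus \{x\})$ such that $(B_1 \setminus \{x\}) \cup \{y\}$ is independent in $\M^*$. As $x \notin B_2$ we have $y \neq x$, so in fact $y \in B_2 \setminus B_1$, and $(B_1 \setminus \{x\}) \cup \{y\}$ is an $m$-element independent set of $\M^*$, i.e.\ a member of $\M^{\downarrow m}$. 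Together with nonemptiness, this exhibits $\M^{\downarrow m}$ as the set of bases of a matroid. (Equivalently, one checks that the independent sets of $\M^*$ of size at most $m$ satisfy the three independence axioms --- heredity is immediate, and augmentation is inherited from $\M^*$ because enlarging a set of size $<m$ by one element keeps it of size $\leq m$ --- so they form a matroid, the \emph{truncation} of $\M^*$ to rank $m$, whose bases are exactly $\M^{\downarrow m}$.)

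There is no serious obstacle here; this is the standard fact that truncations of matroids are matroids. The only point requiring care is the role of the hypothesis $m \leq n-k$: it is precisely what makes the truncation nontrivial, ensuring that $\M^{\downarrow m}$ is nonempty and that all its bases have size exactly $m$. (The stronger statement that $\M^{\downarrow m}$ is a \emph{positroid} whenever $\M$ is --- which this proposition does not assert --- would be established separately using Grassmann necklaces.)
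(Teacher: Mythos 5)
Your proof is correct and takes essentially the same approach as the paper: identify $\M^{\downarrow m}$ with the size-$m$ independent sets of $\M^*$ (i.e.\ the bases of the truncation of $\M^*$ to rank $m$) and deduce the exchange axiom from the independence axioms of $\M^*$. You simply spell out the augmentation step that the paper leaves implicit.
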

\begin{proof}
The bases of $\M^{\downarrow m}$ are the independent sets of $\M^*$ of size $m$.  The exchange axiom for bases of $\M^{\downarrow m}$ follows immediately from the independent set axioms for $\M^*$.
\end{proof}

The following result is immediate from the definition.

\begin{proposition}\label{prop:twistGN}
Let $\M$ be a matroid.  Let the Grassmann necklace of $\M^*$ be $(J_1,\ldots,J_n)$.  Then the positroid $\env(\M^{\downarrow m})$ has Grassmann necklace $(I_1,\ldots,I_n)$ where $I_a$ consists of the $m$ minimal elements of $J_a$ with respect to $\leq_a$.
\end{proposition}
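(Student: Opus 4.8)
The plan is to compute the Grassmann necklace of $\env(\M^{\downarrow m})$ directly from its definition, using the description of the bases of $\M^{\downarrow m}$ obtained in the previous proposition. Recall that $I_a$, the $a$-th entry of the Grassmann necklace of a positroid, is by definition the $\leq_a$-minimal basis; and since $\env(\M^{\downarrow m})$ is the positroid envelope of the matroid $\M^{\downarrow m}$, its $\leq_a$-minimal basis coincides with the $\leq_a$-minimal basis of $\M^{\downarrow m}$ itself (taking the positroid envelope does not change the Grassmann necklace — this is standard from \cite{Pos,Oh}, and I would cite it rather than reprove it). So it suffices to identify, for each $a \in [n]$, the $\leq_a$-minimal element of the base family $\M^{\downarrow m} = \{I \in \binom{[n]}{m} : I \text{ is independent in } \M^*\}$.

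The key step is the following observation about $\leq_a$-minimality among independent sets of a matroid. Let $J_a = \{j_1 <_a j_2 <_a \cdots <_a j_{n-k}\}$ be the $\leq_a$-minimal basis of $\M^*$, i.e. the $a$-th entry of the Grassmann necklace of $\M^*$. I claim the $\leq_a$-minimal independent set of size $m$ in $\M^*$ is exactly $\{j_1, j_2, \ldots, j_m\}$, the $m$ smallest elements of $J_a$ with respect to $\leq_a$. The argument is the standard greedy/matroid-union fact: running the greedy algorithm with respect to the order $\leq_a$ to build a maximal independent set produces precisely $J_a$ (this is what characterizes the Grassmann necklace entry), and the first $m$ elements selected by the greedy algorithm form the $\leq_a$-lexicographically minimal independent set of size $m$. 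More explicitly: any independent set $I$ of size $m$ with $I <_a \{j_1,\ldots,j_m\}$ could be compared coordinatewise; letting $s$ be the first index where they differ, the $\leq_a$-minimality of $J_a$ as a basis (together with the exchange property extending $I$ toward a basis) forces a contradiction. This is the only place any real matroid theory is used, and I expect it to be the main obstacle — not because it is deep, but because one must be careful that "minimal independent set of size $m$" really is an initial segment of the greedy-produced basis, and that this holds for all matroids (not just positroids), so that Proposition~\ref{prop:twistGN} applies to arbitrary $\M$ as stated.

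Granting the claim, the proposition follows at once: the $a$-th Grassmann necklace entry $I_a$ of $\env(\M^{\downarrow m})$ equals the $\leq_a$-minimal basis of $\M^{\downarrow m}$, which equals the $\leq_a$-minimal size-$m$ independent set of $\M^*$, which by the claim is the set of the $m$ smallest elements of $J_a$ with respect to $\leq_a$. One should also note for completeness that $(I_1,\ldots,I_n)$ so defined is genuinely a $(m,n)$-Grassmann necklace — this is automatic since it arises as the Grassmann necklace of the positroid $\env(\M^{\downarrow m})$, but alternatively one could verify the Grassmann necklace axioms directly from the formula $I_a = \{m \text{ smallest elements of } J_a\}$ using that $(J_1,\ldots,J_n)$ is itself a Grassmann necklace. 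Since the excerpt labels this result "immediate from the definition," I would keep the write-up to essentially the two sentences recording the claim and its consequence, deferring the greedy-algorithm justification to a citation of \cite[\S~16--17]{Oh} or \cite{Pos}.
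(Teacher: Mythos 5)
Your proof is correct, and since the paper dismisses this as ``immediate from the definition,'' your write-up is a reasonable elaboration of what that dismissal hides. The two ingredients you isolate are exactly the right ones: (i) the Grassmann necklace of $\env(\M^{\downarrow m})$ equals that of $\M^{\downarrow m}$ itself, since the envelope is by definition the positroid with the same Grassmann necklace; and (ii) the $\leq_a$-minimal size-$m$ independent set of a matroid is the initial $m$-segment of its greedy ($\leq_a$-minimal) basis, which is the standard greedy/exchange argument and holds for all matroids, not only positroids. One small point worth making explicit in a polished version: the paper's $\leq_a$ on $k$-subsets is the Gale (componentwise) order, while you phrase the greedy argument lexicographically; for matroid bases and for $m$-element independent sets alike these two notions of minimality coincide, and both are achieved by the greedy set, so the argument goes through either way. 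With that clarification the proof stands, and your decision to cite \cite{Pos,Oh} for the greedy fact rather than reproving it matches the paper's level of detail.
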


\begin{example}Let $k =2$ and $n =5$.  Consider the matroid $\M= \{13,15,34,35,45\}$.  That is, $2$ is a loop and $1,4$ are parallel.  Then $\M^*=\{245,234,125,124,123\}$ and $\M^{\downarrow 2}=\binom{[5]}{2} - \{35\}$.  The Grassmann necklaces of $\M, \M^*$, and $\M^{\downarrow 2}$ are $(13,34,34,45,51)$, $(123,234,342,452,512)$ and $(12,23,34,45,51)$ respectively.
\end{example}

 Since envelope, duality and taking independent sets of size $m$ all preserve inclusions of matroids, we have the following result.

\begin{corollary}\label{cor:orderpres}
The map $\M \mapsto \env(\M^{\downarrow m}) $ is an order-preserving map from $Q_{n,k}$ to $Q_{n,m}$.
\end{corollary}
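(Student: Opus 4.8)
The plan is to chase the claim through the four operations that make up $\M \mapsto \env(\M^{\downarrow m})$, checking order-preservation for each. Suppose $\M \subseteq \M'$ are positroids of rank $k$ on $[n]$, so that every basis of $\M$ is a basis of $\M'$. First I would observe that matroid duality reverses... no, in fact it \emph{preserves} containment: $I \in \M^*$ means $[n]\setminus I \in \M \subseteq \M'$, hence $[n]\setminus I \in \M'$, i.e. $I \in (\M')^*$; so $\M^* \subseteq (\M')^*$. Next, passing to independent sets of size $m$ is monotone: an $m$-subset contained in some basis of $\M^*$ is a fortiori contained in some basis of $(\M')^*$, so by \cref{def:twistormat} we get $\M^{\downarrow m} \subseteq (\M')^{\downarrow m}$ as collections of $m$-subsets. (Here one uses that these are exactly the independent sets of $\M^*$, resp.\ $(\M')^*$, of size $m$, as recorded in the proof that $\M^{\downarrow m}$ is a matroid.)

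The only step with any content is the last one: $\M^{\downarrow m} \subseteq (\M')^{\downarrow m}$ implies $\env(\M^{\downarrow m}) \subseteq \env((\M')^{\downarrow m})$. For this I would invoke the characterization, recalled in the excerpt, that the envelope is the unique positroid sharing the Grassmann necklace of the given matroid, together with \cref{prop:twistGN}: the Grassmann necklace of $\env(\M^{\downarrow m})$ is $(I_1,\ldots,I_n)$ with $I_a$ the $m$ smallest elements of $J_a$ under $\leq_a$, where $(J_1,\ldots,J_n)$ is the Grassmann necklace of $\M^*$. Since $\M^* \subseteq (\M')^*$, the $\leq_a$-minimal basis $J_a$ of $\M^*$ is $\leq_a$-greater than or equal to the $\leq_a$-minimal basis $J_a'$ of $(\M')^*$ in the Gale order on $k$-subsets (a smaller matroid has lexicographically larger minimal bases). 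Taking the $m$ smallest elements of each is then monotone for the induced Gale order on $m$-subsets, giving $I_a' \leq_a I_a$ for all $a$; equivalently, the necklace of $\env((\M')^{\downarrow m})$ is termwise $\leq_a$ below that of $\env(\M^{\downarrow m})$, which is exactly the condition for $\env((\M')^{\downarrow m}) \supseteq \env(\M^{\downarrow m})$ in $Q_{n,m}$.

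Alternatively — and this is the cleaner route — I would avoid Grassmann necklaces entirely by quoting \cite[Section 3]{KLS}: the positroid envelope is characterized by $\env(\N) = \{$bases $B$ of the appropriate rank $:$ $\Delta_B$ does not vanish identically on the matroid stratum/positroid variety of $\N\}$, and more simply $\env$ is itself monotone because it is the smallest positroid \emph{containing} its argument — if $\N \subseteq \N'$ then $\env(\N')$ is a positroid containing $\N$, hence contains the smallest such, $\env(\N)$. This reduces the whole proposition to the elementary observations of the previous paragraph, and is presumably what the excerpt means by ``since envelope, duality and taking independent sets of size $m$ all preserv[e] inclusions of matroids.'' I expect the main (and only) subtlety to be making sure the intermediate objects $\M^*$ and $\M^{\downarrow m}$ — which need not be positroids — are handled correctly, but since $\env$ absorbs any failure of the positroid property at the very end, no difficulty actually arises.
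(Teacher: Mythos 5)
Your proposal is correct, and the ``cleaner'' alternative route you end with is precisely the paper's one-line proof: duality preserves containment (for matroids of equal rank), passing to size-$m$ independent sets preserves containment, and $\env$ is monotone because it is by definition the smallest positroid containing its argument. Your first route via Grassmann necklaces is also valid but heavier, since it additionally relies on the fact that containment of positroids is characterized by termwise Gale comparison of their Grassmann necklaces (and contains a small typo: $\M^*$ and $(\M')^*$ have rank $n-k$, so the relevant Gale order is on $(n-k)$-subsets, not $k$-subsets); the paper avoids this by using the envelope-monotonicity argument directly.
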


Positroid envelope commutes with duality.

\begin{proposition}\label{prop:envcomm}
Let $\M$ be a matroid.  Then $\env(\M)^* = \env(\M^*)$.
\end{proposition}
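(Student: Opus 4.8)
The plan is to prove equality of the two positroids by a symmetric double-inclusion argument, using only two inputs: the characterization (recorded just above) of $\env(\N)$ as the smallest positroid containing a matroid $\N$, and the standard fact that the dual of a positroid is again a positroid \cite{Pos,KLS}.

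First I would note a trivial but essential point: duality is order-preserving on matroids with a fixed ground set. Indeed, if $\N_1 \subseteq \N_2$ are matroids on $[n]$, then since $\N_1$ is nonempty any base of $\N_1$ is a base of $\N_2$, forcing the two matroids to have equal rank, and hence $\N_1^* \subseteq \N_2^*$. Applying this to $\M \subseteq \env(\M)$ gives $\M^* \subseteq \env(\M)^*$; and since $\env(\M)^*$ is a positroid (a dual of a positroid) that contains $\M^*$, minimality of the envelope yields $\env(\M^*) \subseteq \env(\M)^*$.

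For the reverse inclusion I would run the same argument with $\M^*$ in place of $\M$, using $(\M^*)^* = \M$: this produces $\M \subseteq \env(\M^*)^*$, so by minimality $\env(\M) \subseteq \env(\M^*)^*$, and dualizing (again by the order-preservation above, together with $\N^{**}=\N$) gives $\env(\M)^* \subseteq \env(\M^*)$. Combining the two inclusions finishes the proof.

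I do not anticipate a real obstacle. The only points requiring any care are that containment of matroids forces equal rank (so that dualization is well behaved) and the invocation that positroids are closed under duality. An alternative route — checking directly that the two positroids have the same Grassmann necklace — is also available, but it requires the combinatorial description of the Grassmann necklace of a dual matroid and is less clean than the universal-property argument above.
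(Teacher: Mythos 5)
Your argument is correct, and it takes a genuinely different route from the paper. You work purely with the universal property of the envelope: $\env(\M)$ is the smallest positroid containing $\M$, duality is inclusion-preserving on matroids over a fixed ground set, and (the one external fact) the dual of a positroid is a positroid. Two applications of minimality, symmetrized by swapping $\M \leftrightarrow \M^*$, give the two inclusions. The paper instead argues via Grassmann necklaces: it introduces the \emph{dual} Grassmann necklace $(I_1,\ldots,I_n)$ of $\M$ (with $I_a$ the $\leq_a$-maximal basis), observes that $\env(\M)$ is the unique positroid with this dual Grassmann necklace, and that $\M^*$ has (ordinary) Grassmann necklace $(J_a) = ([n]\setminus I_a)$; then $\env(\M)^*$ and $\env(\M^*)$ are both positroids with the same Grassmann necklace, hence equal. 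What your route buys is economy and transparency: it avoids introducing dual Grassmann necklaces and proves the statement by pure category-theoretic minimality. What the paper's route buys is self-containedness within the Grassmann-necklace framework already in play (\cref{prop:twistGN}, \cref{prop:envenv}), and it does not need to invoke closure of positroids under duality as a separate black box --- though note that fact is still implicitly used (to know that $\env(\M)^*$ is a positroid), so the two proofs are closer in their dependencies than it may first appear. One small point to spell out if you keep your version: containment $\N_1\subseteq\N_2$ of (nonempty) matroids on the same ground set forces equal rank (since all bases of a matroid have the same cardinality), which is why dualization preserves the inclusion; you state this correctly.
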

\begin{proof}
The \emph{dual Grassmann necklace} of $\M$ is the sequence $(I_1,\ldots,I_n)$, where $I_a \in \M$ is the basis that is maximal with respect to $\leq_a$.  Both the Grassmann necklace and dual Grassmann necklace uniquely determine a positroid.  The positroid envelope $\env(\M)$ of $\M$ is the unique positroid with dual Grassmann necklace equal to $(I_1,\ldots,I_n)$.  The dual matroid $\M^*$ has (usual) Grassmann necklace $(J_1,\ldots,J_n)$ where $J_a = [n] \setminus I_a$.  So both $\env(\M)^*$ and $\env(\M^*)$ are positroids with Grassmann necklace equal to $(J_1,\ldots,J_n)$.
\end{proof}

\begin{proposition}\label{prop:envenv}
We have $\env(\M^{\downarrow m}) = \env((\env(\M))^{\downarrow m})$.
\end{proposition}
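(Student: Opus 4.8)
The plan is to reduce the identity to a statement about Grassmann necklaces, using the fact that a positroid is completely determined by its Grassmann necklace. Both $\env(\M^{\downarrow m})$ and $\env((\env(\M))^{\downarrow m})$ are positroids of rank $m$ on $[n]$: since $m \le n-k = \rank(\M^*)$, both $\M^{\downarrow m}$ and $(\env(\M))^{\downarrow m}$ are nonempty matroids of rank $m$, and passing to the positroid envelope preserves rank. Hence it suffices to show that these two positroids have the same Grassmann necklace.

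First I would apply \cref{prop:twistGN} to $\M$: writing $(J_1,\ldots,J_n)$ for the Grassmann necklace of $\M^*$, the Grassmann necklace of $\env(\M^{\downarrow m})$ is $(I_1,\ldots,I_n)$, where $I_a$ consists of the $m$ minimal elements of $J_a$ with respect to $\leq_a$. Next I would apply the same proposition with the matroid $\env(\M)$ in place of $\M$: the Grassmann necklace of $\env((\env(\M))^{\downarrow m})$ is produced by the identical recipe from the Grassmann necklace of $(\env(\M))^*$. So the whole matter comes down to comparing the Grassmann necklaces of $\M^*$ and of $(\env(\M))^*$. Here I would invoke \cref{prop:envcomm}, which gives $(\env(\M))^* = \env(\M^*)$, together with the defining property of the positroid envelope: $\env(\M^*)$ is the unique positroid with the same Grassmann necklace as $\M^*$, namely $(J_1,\ldots,J_n)$. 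Feeding this into the two instances of \cref{prop:twistGN} shows that $\env(\M^{\downarrow m})$ and $\env((\env(\M))^{\downarrow m})$ have the same Grassmann necklace $(I_1,\ldots,I_n)$, hence coincide.

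I do not expect a genuine obstacle; the proof is essentially a bookkeeping argument once \cref{prop:twistGN} and \cref{prop:envcomm} are in hand. The one point worth flagging is that $\M^{\downarrow m}$ and $(\env(\M))^{\downarrow m}$ are in general \emph{not} equal as matroids — the envelope can enlarge the collection of size-$m$ independent sets of the dual matroid, since envelope only increases the rank function — so the argument must be routed through the Grassmann necklaces of the envelopes rather than through a direct matroid-level comparison. This is precisely why \cref{prop:twistGN}, which computes the Grassmann necklace of $\env(\M^{\downarrow m})$ directly in terms of $\M^*$, is the right tool: it lets us replace $\M$ by $\env(\M)$ without changing the output, exploiting that the output depends on $\M$ only through the Grassmann necklace of $\M^*$.
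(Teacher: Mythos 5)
Your proof is correct and follows essentially the same route as the paper's: both invoke \cref{prop:envcomm} to see that $\M^*$ and $\env(\M)^*$ have the same Grassmann necklace, then feed this into \cref{prop:twistGN} to conclude that the two envelopes have the same Grassmann necklace and hence coincide. The preliminary paragraph about ranks is harmless but unnecessary, since a positroid is already uniquely determined by its Grassmann necklace (which encodes the rank via the size of each $I_a$); the remark that $\M^{\downarrow m}$ and $(\env(\M))^{\downarrow m}$ need not be equal as matroids is a useful clarification that the paper leaves implicit.
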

\begin{proof}
By \cref{prop:envcomm}, $\M^*$ and $\env(\M)^*$ have the same Grassmann necklace.  By \cref{prop:twistGN}, it follows that $\M^{\downarrow m}$ and $(\env(\M))^{\downarrow m}$ have the same Grassmann necklace, and thus the same positroid envelope.
\end{proof}

\begin{definition}\label{def:Pnkm}
Let $P_{n,k,m} \subset Q_{n,m}$ to be the subposet of positroids $\env(\M^{\downarrow m})$, ordered by inclusion, as $\M$ varies over $Q_{n,k}$.\end{definition}
By \cref{prop:envenv}, in \cref{def:Pnkm} we could equivalently have let $\M$ vary over all matroids of rank $k$ on $[n]$.

We will be particularly interested in the case $m = 2$.  In this case, we write $P_{n,k}$ for $P_{n,k,2}$.  This is a subposet of $Q_{n,2}$.  Rank $2$ positroids $\N$ on $[n]$ have the following direct description.  A subset $L \subset [n]$ are loops.  A number of disjoint cyclic intervals $[a_1,b_1],\ldots,[a_r,b_r]$ are specified such that $\rank([a_i,b_i]) = 1$.  In other words, all elements in $[a_i,b_i]$ that are not loops are parallel.  We use the notation $\N= (L,\{[a_1,b_1],\ldots,[a_r,b_r]\})$ for elements of $Q_{n,2}$.  We always assume in this notation that $r$ is taken to be minimal and each $[a_i,b_i]$ is taken as small as possible.  For example, if $|[a_i,b_i]\setminus L| = 1$ then this cyclic interval can be omitted; if $a_i \in L$ or $b_i \in L$ then the cyclic interval $[a_i,b_i]$ can be made smaller.

Let $\N = (L,\{[a_1,b_1],\ldots,[a_r,b_r]\}) \in Q_{n,2}$ and $\N' = (L',\{[a'_1,b'_1],\ldots,[a'_s,b'_s]\}) \in Q_{n,2}$.  Then $\N \leq \N'$ if and only if $L' \subseteq L$ and for each cyclic interval $[a'_i,b'_i]$, either there exists a cyclic interval $[a_j,b_j]$ such that $[a'_i,b'_i] \subseteq [a_j,b_j]$, or $|[a'_i,b'_i] \setminus L| \leq 1$.

For $\N= (L,\{[a_1,b_1],\ldots,[a_r,b_r]\}) \in Q_{n,2}$, let $S = S(\N):= \bigcup_i [a_i,b_i]$, and define
\begin{equation}\label{eq:cd}
d(\N):= 2k +r - |S \setminus L| - 2|L|, \;\; c(\N) := 2k-d(\N), \;\;  \text{and} \;\; e(\N) := r+k - |S \cup L|.
\end{equation}
(Note that $e(\N)$ depends on $k$.)  

\begin{definition}\label{def:N2}
Suppose that $\N \in Q_{n,2}$ satisfies $e(\N) \geq 0$.  We define a matroid $(\N^{\uparrow k})^*$ of rank $n-k$ on the groundset $[n]$ as follows.  Begin with the uniform matroid of rank $n-k$ on the groundset $([n] \setminus (S \cup L)) \cup \{a_1,\ldots,a_r\}$, which has cardinality $e(\N) +n- k \geq n-k$.  For each $i=1,2,\ldots,r$, add elements labeled by $(a_i,b_i] \setminus L$ parallel to $a_i$.  Then add loops labeled by the set $L \subset [n]$.  
\end{definition}

The rank $k$ matroid $\N^{\uparrow k}$ is defined to be the dual of $(\N^{\uparrow k})^*$.  It can be described as the largest matroid satisfying the following rank conditions.  The elements in $L$ are coloops of $\N^{\uparrow k}$.  For each cyclic interval $[a_i,b_i]$, let $A = [a_i,b_i] \setminus L$.  Then $$\rank_{\N^{\uparrow k}}([n]\setminus A)= k - |A|+1.$$
It is straightforward to check that $(\N^{\uparrow k})^{\downarrow 2} = \N$.

\begin{lemma}
Suppose that $\N \in Q_{n,2}$ satisfies $e(\N) \geq 0$.  Then $\N^{\uparrow k}$ and $(\N^{\uparrow k})^*$ are positroids.
\end{lemma}
\begin{proof}
The uniform matroid is a positroid.  Adding cyclically consecutive parallel elements, or adding loops, to a matroid preserves the property of being a positroid.  It follows that $(\N^{\uparrow k})^*$ is a positroid.  Since the class of positroids is closed under duality \cite{OhFPSAC}, $\N^{\uparrow k}$ is also a positroid.
\end{proof}

Recall that a poset is graded if all maximal chains have the same length.

\begin{theorem}\label{thm:Pnk}
Suppose that $n-k \geq 2$.  Then 
\begin{enumerate}
\item $P_{n,k}$ is an upper order ideal of $Q_{n,2}$ with $\binom{n}{k}$ minimal elements $(L,\{\})$ where $|L| = k$,
\item $P_{n,k} = \{\N \in Q_{n,2} \mid e(\N) \geq 0\}$,
\item $P_{n,k}$ is graded of height $2k$ and has rank function $d(\N)$ and corank function $c(\N)$.
\end{enumerate}
\end{theorem}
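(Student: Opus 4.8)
The plan is to establish parts (2) and (1) directly, with part (3) following formally from part (1) together with the gradedness of $Q_{n,2}$. The key reformulation: since $\rank(\M^*) = n-k \geq 2$, \cref{def:twistormat} identifies $\M^{\downarrow 2}$ with the rank-$2$ truncation $T^2(\M^*)$, whose bases are the independent $2$-subsets of $\M^*$. Hence $\M^{\downarrow 2}$ has the same loops and the same parallel classes as $\M^*$, and since positroid envelope preserves loops, $L(\env(\M^{\downarrow 2}))$ is the coloop set of $\M$; also $\M^{\downarrow 2} \subseteq \env(\M^{\downarrow 2})$, so any two non-loop elements lying in a common interval $[a_i,b_i]$ of $\env(\M^{\downarrow 2})$ form a non-basis of $\M^{\downarrow 2}$, hence a parallel pair of $\M^*$. (One can instead read these facts off the Grassmann necklace via \cref{prop:twistGN}.)

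For part (2), first $P_{n,k} \subseteq \{\N : e(\N) \geq 0\}$: write $\N = \env(\M^{\downarrow 2}) = (L,\{[a_1,b_1],\dots,[a_r,b_r]\})$ and $s_i := |[a_i,b_i]\setminus L| \geq 2$. The $|L|$ loops of $\M^*$ do not contribute to $\rank(\M^*)$, and an independent set of $\M^*$ uses at most one element of each parallel class $[a_i,b_i]\setminus L$; as these classes are disjoint, $n-k = \rank(\M^*) \leq (n-|L|) - \sum_i (s_i-1) = n - |S\cup L| + r$, which rearranges to $e(\N) = r+k-|S\cup L| \geq 0$. Conversely, suppose $\N = (L,\{[a_1,b_1],\dots,[a_r,b_r]\}) \in Q_{n,2}$ has $e(\N) \geq 0$. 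The simplification of $\N$ is the uniform matroid $U_{2,t}$ with $t := r + n - |S\cup L| = e(\N) + (n-k) \geq n-k \geq 2$, so there is a rank-$(n-k)$ matroid $\mathcal E$ on $[n]$ with loop set $L$, parallel classes exactly the $[a_i,b_i]\setminus L$, and simplification $U_{n-k,t}$ — obtained by replacing $r$ of the $t$ elements of $U_{n-k,t}$ by parallel classes of sizes $s_1,\dots,s_r$ and adjoining $|L|$ loops. Then $T^2(\mathcal E) = \N$, so taking $\M := \mathcal E^*$ (a rank-$k$ matroid, which suffices by \cref{prop:envenv}) gives $\env(\M^{\downarrow 2}) = \env(T^2(\mathcal E)) = \env(\N) = \N$, as $\N$ is already a positroid. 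Hence $P_{n,k} = \{\N \in Q_{n,2} : e(\N) \geq 0\}$.

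For part (1), it remains to show $\{e \geq 0\}$ is an upper order ideal of $Q_{n,2}$ with the stated minimal elements. Writing $e(\N) = t(\N) - (n-k)$ with $t(\N) = r(\N) + n - |S(\N)\cup L(\N)|$ the number of parallel classes of non-loop elements of $\N$, upward-closedness follows from the monotonicity of $t$: if $\N \leq \N'$ in $Q_{n,2}$, then (as $\N'$ is more generic) the parallel classes of $\N'$ restricted to the non-loops of $\N$ refine those of $\N$, and loops of $\N$ that are non-loops of $\N'$ contribute further classes, so $t(\N') \geq t(\N)$. For the minimal elements: if $e(\N) \geq 0$, i.e.\ $|S\cup L| - r \leq k$, pick a $k$-set $L' \supseteq L$ meeting each $[a_i,b_i]$ in all but one of its non-loop elements; then $(L',\{\}) \leq \N$ in $Q_{n,2}$, and $(L',\{\}) = \env(\M^{\downarrow 2})$ for $\M$ the rank-$k$ matroid with unique basis $L'$, so $(L',\{\}) \in P_{n,k}$ with $e = 0$. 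Distinct such $(L',\{\})$ are pairwise incomparable (indeed $(L_1,\{\}) \leq (L_2,\{\})$ iff $L_2 \subseteq L_1$), and every element of $P_{n,k}$ lies above one of them; so these $\binom{n}{k}$ positroids are exactly the minimal elements of $P_{n,k}$. Finally $(\emptyset,\{\})$, the top of $Q_{n,2}$, lies in $P_{n,k}$ (its $e$ equals $k$), so $P_{n,k}$ has a unique maximal element.

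For part (3): $Q_{n,2}$ is graded with rank function $\dim\Pi_\N$, and Postnikov's dimension formula for positroid cells gives $\dim\Pi_\N = 2n - 4 - c(\N)$ with $c(\N) = 2|L| + |S\setminus L| - r$. Since $P_{n,k}$ is an upper order ideal of $Q_{n,2}$, its covering relations coincide with those of $Q_{n,2}$; its minimal elements all have $Q_{n,2}$-rank $2(n-k)-4$ and its unique maximal element has $Q_{n,2}$-rank $2n-4$, so $P_{n,k}$ is graded of height $2k$, with rank function $\dim\Pi_\N - (2(n-k)-4) = 2k + r - |S\setminus L| - 2|L| = d(\N)$ and corank function $2k - d(\N) = c(\N)$. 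The step I expect to be the main obstacle is part (2): pinning down rigorously the translation between the Grassmann-necklace form of $\env(\M^{\downarrow 2})$ (\cref{prop:twistGN}), its truncation form $T^2(\M^*)$, and its interval presentation $(L,\{[a_i,b_i]\})$ — so that the identification of loops with coloops of $\M$ and of interval-parallel pairs with dependent pairs of $\M^*$ is fully justified — together with verifying $T^2(\mathcal E) = \N$ and checking that the construction behaves correctly in the degenerate small-$n$ configurations.
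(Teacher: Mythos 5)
Your proof is correct and follows the same overall strategy as the paper: establish (2) via a parallel-class counting argument (after identifying $\M^{\downarrow 2}$ with the rank-$2$ truncation of $\M^*$), deduce (1) as an upper-order-ideal consequence, and obtain (3) from gradedness of $Q_{n,2}$. The one place where you genuinely diverge is the converse direction of (2): the paper constructs a positroid directly by specifying the $\leq_a$-minimal bases and checking they form a Grassmann necklace (this also produces the explicit positroid $\N^{\uparrow k}$ used later in the paper), whereas you build $\mathcal{E} = \M^*$ as a thickened uniform matroid with the prescribed loops and parallel classes and invoke \cref{prop:envenv} to pass to $\env(\M^{\downarrow 2})$ without worrying whether $\M$ is a positroid. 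Your construction is more elementary and self-contained, avoiding the Grassmann-necklace verification; the paper's buys the explicit description of $\N^{\uparrow k}$ for free. You also fill in two steps the paper leaves terse: the monotonicity of $t(\N)$ under $\leq$ in $Q_{n,2}$ (the paper says ``it is easy to see''), and Postnikov's rank-$2$ cell-dimension formula $\dim \Pi_\N = 2n - 4 - c(\N)$ to derive (3), which the paper treats as immediate. All of these additions are sound, and the bookkeeping ($t = r + n - |S \cup L|$, $e(\N) = t - (n-k)$, the identification of loops of $\env(\M^{\downarrow 2})$ with loops of $\M^*$ using $n-k \geq 2$) checks out.
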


\begin{proof}
We show (2). 
Suppose that $\env(\M^{\downarrow 2}) = \N$.  By \cref{prop:twistGN}, the loops of $\M^*$ are exactly the loops of $\N$ since $a$ is a loop of $\N$ if and only if $a \notin I_a(\N)$.  Let $\M^*$ have Grassmann necklace $(J_1,\ldots,J_n)$.  We must have $\rank_{\M^*}([a_i,b_i]) = 1$ for otherwise one of the $J_a$ (with $a \in [a_i,b_i]$) would contain two elements in $[a_i,b_i]$.  All these rank conditions are disjoint.  Let $\M^*$ be any matroid that satisfies these rank conditions and has loops equal to $L$.  There are $n - |S \cup L|$ elements of the ground set for which there are no conditions imposed.  Any independent set of $\M^*$ consists of some of these elements together with at most one element in each cyclic interval $[a_i,b_i]$.  It follows that the rank of $\M^*$ is bounded above by $r + n- |S \cup L|$, and this is greater than or equal to $n-k$ if and only if $e(\N) \geq 0$.    Conversely, the construction of \cref{def:N2} shows that when $e(\N) \geq 0$, a matroid $\M^*$ of rank $n-k$ exists satisfying these conditions.  This proves (2).   

It is easy to see, from the description of the partial order on $Q_{n,2}$ in terms of the encoding $\N= (L,\{[a_1,b_1],\ldots,[a_r,b_r]\})$, that the condition $e(\N) \geq 0$ defines an upper order ideal in $Q_{n,2}$.  If $a \in S \setminus L$ then changing $a$ to a loop does not change the value of $e(\N)$.  It follows from this that the minimal elements of $P_{n,k}$ are of the form $(L,\{\})$, and $e(L,\{\}) = 0$ exactly when $|L|=k$.  This proves (1).  

Statement (3) also follows since $Q_{n,2}$ itself is a graded poset with rank function equal to ${\rm rk}(\N) = 2(n-2) +r - |S \setminus L| - 2|L|$.
\end{proof}

We remark that the minimal elements described in \cref{thm:Pnk}(1) can be identified (by removing loops) with the rank 2 uniform matroids on $[n] \setminus L$.

\begin{remark}
Let $\N= (L, \{[a_1,b_1],\ldots,[a_r,b_r]\}) \in P_{n,k}$.  The positroid $(\N^{\uparrow k})^*$ of \cref{def:N2} can also be described as follows.  
Let $I_a \in \binom{[n]}{n-k}$ be the $\leq_a$-minimal subset satisfying the two conditions: $I_a \cap L = \emptyset$ and $|I_a \cap [a_i,b_i]| \leq 1$ for $i=1,2,\ldots,r$.  Then one can check directly that $\I = (I_1,I_2,\ldots,I_n)$ is a Grassmann necklace, and we have $(\N^{\uparrow k})^* = \M_{\I}$. 

Let $P \subset \R^n$ be the polytope in the subspace $H_{n-k} = \{(x_1,\ldots,x_n) \mid x_1+ \cdots + x_n = n-k\} \subset \R^n$ cut out by the equalities and inequalities
\begin{align*}
&0 \leq x_i \leq 1 & \mbox{for $i \in [n]$},\\
&x_\ell = 0 &\mbox{for $\ell \in L$},\\
&0 \leq x_{a_i} + x_{a_i+1} + \cdots + x_{b_i} \leq 1 &\mbox{for $i=1,2,\ldots,r$.}
\end{align*}
Then $P$ is the matroid polytope of $(\N^{\uparrow k})^*$ and $P$ is an alcoved polytope, generalized permutohedron, and polypositroid \cite{LP2}.
 \end{remark}

\section{The face stratification of the $m=2$ amplituhedron}

For a positroid $\M \in Q_{n,k}$, let $\Pi_\M$, $\oPi_\M$, and $\Pi_{\M,>0}$ denote the positroid variety, open positroid variety, and open (totally nonnegative) positroid cell respectively \cite{Pos,KLS,LamCDM}.  \cref{def:twistormat} of the twistor map on matroids is motivated by the following result.

\begin{lemma}\label{lem:down}
Suppose that $C \in \Pi_\M$ and $\varphi$ is defined on $C$.  Then $\varphi_m(C) \in \Pi_{\env(\M^{\downarrow m})}$.
\end{lemma}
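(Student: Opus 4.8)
The plan is to reduce the statement about the twistor variety $\Pi_{\env(\M^{\downarrow m})}$ to a statement about which twistor coordinates $\langle I \rangle$ vanish identically on $C$ and which do not. First recall that a point $Y \in \Gr(m,n)$ lies in the open positroid variety $\oPi_{\N}$ for a positroid $\N$ iff the support of the Plücker vector of $Y$ (the set of nonzero Plücker coordinates) is exactly the set of bases of $\N$; and $Y$ lies in the (closed) positroid variety $\Pi_{\N}$ iff every nonzero Plücker coordinate of $Y$ is a basis of $\N$, equivalently the matroid of $Y$ is a weak image of $\N$ (its set of bases is contained in that of $\N$). So it suffices to show: for every $I \in \binom{[n]}{m}$, if $\langle I \rangle = \Delta_I(\varphi_m(C)) \neq 0$ then $I$ is a basis of $\env(\M^{\downarrow m})$. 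Since $\env(\M^{\downarrow m})$ is the positroid envelope of $\M^{\downarrow m}$, and the bases of $\M^{\downarrow m}$ are precisely the independent $m$-subsets of $\M^*$, the cleanest route is to show the stronger statement that $\langle I \rangle \neq 0$ forces $I$ to be independent in $\M^*$, i.e. $I$ is a basis of $\M^{\downarrow m}$; the containment $\M^{\downarrow m} \subseteq \env(\M^{\downarrow m})$ then finishes it.

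The key computation is the expansion
$$
\langle I \rangle = \sum_{K \in \binom{[n]}{k}} \Delta_K(C)\,\Delta_{KI}(Z).
$$
I would argue by contrapositive: suppose $I$ is \emph{not} independent in $\M^*$, equivalently $I$ is not contained in any basis of $\M^*$, equivalently $[n]\setminus I$ contains no basis of $\M$, equivalently $\rank_\M([n]\setminus I) < k$. I must show every term in the sum vanishes. Fix $K$ with $\Delta_K(C) \neq 0$, so $K$ is a basis of $\M$ (using $C \in \Pi_\M$, so the matroid of $C$ is a weak image of $\M$ — actually $C \in \Pi_\M$ only gives that nonzero Plückers of $C$ are bases of $\M$, which is exactly what we need). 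If $K \cap I \neq \emptyset$ then $\Delta_{KI}(Z) = 0$ by definition of the signed Plücker coordinate. If $K \cap I = \emptyset$ then $K \subseteq [n]\setminus I$; but $|K| = k > \rank_\M([n]\setminus I)$, contradicting that $K$ is independent in $\M$. Hence no valid $K$ exists and $\langle I \rangle = 0$. This shows the support of $\varphi_m(C)$ is contained in the bases of $\M^{\downarrow m} \subseteq \env(\M^{\downarrow m})$, so $\varphi_m(C) \in \Pi_{\env(\M^{\downarrow m})}$.

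The one genuine subtlety — and what I'd flag as the main point requiring care — is the direction of the matroid inequality: we only get a \emph{one-sided} conclusion (nonvanishing of $\langle I \rangle$ implies $I$ is a basis) and hence membership in the \emph{closed} positroid variety $\Pi_{\env(\M^{\downarrow m})}$, not the open one $\oPi_{\env(\M^{\downarrow m})}$. This is exactly right for the lemma as stated, and is why the statement uses $\Pi$ rather than $\oPi$: even if $C$ lies in the open cell $\Pi_{\M,>0}$, a twistor coordinate $\langle I \rangle$ with $I$ a basis of $\M^{\downarrow m}$ can still vanish at $C$ because of cancellation among the terms $\Delta_K(C)\Delta_{KI}(Z)$, so $\varphi_m(C)$ need not land in the open stratum. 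I would remark on this to make clear no stronger statement is being claimed, and note that pinning down exactly when $\varphi_m(C)$ lies in the open positroid variety is the substance of the later results (e.g. the description of $P_{n,k}$ and the face stratification). No other step poses difficulty: the matroid-theoretic equivalences ("$I$ independent in $\M^*$" $\Leftrightarrow$ "$\rank_\M([n]\setminus I) < k$" fails appropriately) are standard duality, and the vanishing argument is the short case-split above.
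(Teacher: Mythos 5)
Your argument is correct and takes essentially the same route as the paper: expand $\langle I\rangle=\sum_{K\in\M}\Delta_K(C)\Delta_{KI}(Z)$, observe that $I\notin\M^{\downarrow m}$ forces $K\cap I\neq\emptyset$ for every base $K\in\M$ (so each $\Delta_{KI}(Z)=0$), conclude $\langle I\rangle=0$ for all $I$ outside $\M^{\downarrow m}\subseteq\env(\M^{\downarrow m})$, and invoke the support characterization of the closed positroid variety $\Pi_{\env(\M^{\downarrow m})}$. One incidental inaccuracy, though it plays no role in the proof: your parenthetical characterization of the \emph{open} stratum $\oPi_\N$ as the locus where the Pl\"ucker support equals $\N$ exactly is not correct over $\R$ or $\mathbb{C}$ (the matroid of a point of $\oPi_\N$ may be a proper submatroid of $\N$; the right statement is that the positroid envelope of that matroid equals $\N$), but your argument only uses the correct description of the closed variety $\Pi_\N$ as the locus where the support is contained in $\N$, so the proof stands.
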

\begin{proof}
We have
$$
\langle I \rangle =\sum_{K} \Delta_K(C) \Delta_{K I}(Z) = \sum_{K \in \M} \Delta_K(C) \Delta_{K I}(Z)
$$
where for the second equality we have used that $\Delta_K(C)=0$ for $K \notin \M$ when $C \in \Pi_\M$.  We have $I \notin \M^{\downarrow m}$ if and only if $I \cap K \neq \emptyset$ for all $K \in \M$.  In this case, $\Delta_{KI}(Z) = 0$ for all $K \in \M$, and thus $\langle I \rangle  = 0$ for $I \notin \M^{\downarrow m}$.   In particular, $\langle I \rangle = 0$ for $I \notin \env(\M^{\downarrow m})$.  It follows that $\varphi_m(C) \in \Pi_{\M^{\downarrow m}}$.
\end{proof}

\begin{proposition}\label{prop:boundarysign}
Let $\M$ be a rank $k$ positroid on $[n]$, and let $\N = \M^{\downarrow m}$.  Suppose that $\N$ has Grassmann necklace $(I_1,\ldots,I_n)$.  Then for each 
$a \in [n]$, the twistor coordinate $\langle I_a \rangle$ takes constant sign (and never vanishes) on the face $\Pi_{\M,>0}$.
\end{proposition}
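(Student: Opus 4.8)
The plan is to compute the twistor coordinate $\langle I_a \rangle$ explicitly for $C \in \Pi_{\M,>0}$ and show the relevant sign is forced. By \cref{lem:down}, $\langle I \rangle = \sum_{K \in \M} \Delta_K(C)\,\Delta_{KI}(Z)$, and only $K$ with $K \cap I = \emptyset$ contribute. So the key step is to analyze, for the \emph{particular} subset $I = I_a$ coming from the Grassmann necklace of $\N = \M^{\downarrow m}$, which bases $K \in \M$ are disjoint from $I_a$, and to show that all the nonzero terms $\Delta_K(C)\,\Delta_{K I_a}(Z)$ share the same sign. Since $C \in \Pi_{\M,>0}$ means $\Delta_K(C) > 0$ for $K \in \M$ (and $=0$ otherwise), this reduces to showing that $\Delta_{K I_a}(Z)$ has constant sign over all $K \in \M$ with $K \cap I_a = \emptyset$, and that at least one such $K$ exists.

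First I would establish existence of a contributing term. By \cref{prop:twistGN}, $I_a$ consists of the $m$ minimal elements (with respect to $\leq_a$) of $J_a$, the $\leq_a$-minimal basis of $\M^*$; equivalently $I_a$ is an independent set of $\M^*$, so $[n] \setminus I_a$ has rank $k$ in $\M$ and hence contains some basis $K_0 \in \M$ disjoint from $I_a$. This $K_0$ contributes a nonzero term, so $\langle I_a \rangle \neq 0$ provided all contributing terms have the same sign. For the sign: recall that $Z$ is a positive matrix, so $\Delta_{K \cup I_a}(Z) > 0$ for every $(k+m)$-subset, and the signed Plücker coordinate $\Delta_{K I_a}(Z)$ is $\pm \Delta_{K \cup I_a}(Z)$ where the sign is the sign of the permutation sorting the concatenation $(K, I_a)$ into increasing order (in the usual order on $[n]$). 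So I must show this reordering sign is the same for all $K \in \M$ disjoint from $I_a$. The cleanest way is to exploit that $I_a$ is an \emph{order ideal–like} set: it is the set of $m$ smallest elements of $J_a$ in the $\leq_a$ order. Working in the $\leq_a$ order (which only changes all signs by a global constant, harmless since we only care about constancy), $I_a$ consists of elements that are $\leq_a$-below essentially everything relevant — more precisely, below every element of $[n]\setminus J_a$, and the elements of $J_a \setminus I_a$ are exactly the elements of the complement $[n]\setminus I_a$ that could lie $\leq_a$-between elements of $I_a$ and the rest. The point I would make precise is that for any $K \subseteq [n]\setminus I_a$ that is a basis of $\M$, the number of pairs $(i,k) \in I_a \times K$ with $k <_a i$ depends only on $|K \cap (J_a \setminus I_a)|$ contributions...

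Actually the robust route: since $\M^* $ is a matroid with Grassmann necklace having $J_a$ as its $\leq_a$-minimal basis, and $I_a \subseteq J_a$ is the bottom $m$ elements, every basis $K$ of $\M$ disjoint from $I_a$ satisfies $[n]\setminus K \supseteq I_a$ and $[n]\setminus K$ is a basis of $\M^*$. By the Grassmann necklace property, $J_a \leq_a ([n]\setminus K)$, which combined with $I_a$ being the $\leq_a$-bottom of $J_a$ forces $I_a$ to be $\leq_a$-dominated coordinatewise by $[n]\setminus K$; I would translate this into the statement that each element of $I_a$ is $\leq_a$-smaller than each of the $\geq n-k-m+1$ "large" elements of $[n]\setminus I_a$ outside a fixed set of size $\le$ something, which pins down the interleaving parity of $I_a$ within any such $K$. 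The reordering sign $\Delta_{K I_a}(Z) = \operatorname{sgn}(\sigma_{K,I_a})\,\Delta_{K\cup I_a}(Z)$ then has $\operatorname{sgn}(\sigma_{K,I_a})$ independent of $K$, and since $\Delta_{K\cup I_a}(Z) > 0$ always, every term in the sum has the same sign; hence $\langle I_a\rangle$ has that constant sign and is nonzero on $\Pi_{\M,>0}$.

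The main obstacle is the bookkeeping in the previous paragraph: controlling the reordering sign $\operatorname{sgn}(\sigma_{K,I_a})$ uniformly over all bases $K \in \M$ disjoint from $I_a$. This is the crux — everything else (nonvanishing via existence of $K_0$, positivity of $Z$-minors, positivity of $\Delta_K(C)$) is immediate. I expect the sign argument is most transparently done by fixing the $\leq_a$-order as the reference order so that $I_a$ becomes a genuine initial segment-type set inside the relevant ground set, making the interleaving sign a function of $|I_a|$ and the positions of $J_a \setminus I_a$ only, hence constant in $K$; one then transports back to the standard order at the cost of a global sign that does not affect the claim.
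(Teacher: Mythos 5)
Your plan tracks the paper's proof closely --- both reduce the claim to showing that the reordering sign of $\Delta_{KI_a}(Z)$ is constant over all $K\in\M$ with $K\cap I_a=\emptyset$, both handle nonvanishing via positivity of $Z$ and the existence of a contributing basis, and both invoke the $\leq_a$-minimality of $J_a$ (equivalently, the $\leq_a$-maximality of $K_a:=[n]\setminus J_a$, which is what the paper uses). But the interleaving step, which you correctly identify as the crux, is where your argument goes wrong, and it is wrong rather than merely vague: you claim that $I_a$ lies $\leq_a$-below every element of $[n]\setminus J_a$ and that the potential interleavers are $J_a\setminus I_a$. This is exactly backwards. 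The set $J_a\setminus I_a$ consists of the $\leq_a$-\emph{largest} $n-k-m$ elements of $J_a$, so it never interleaves with $I_a$; whereas $[n]\setminus J_a$ can and typically does contain elements strictly $\leq_a$-between two elements of $I_a$ (e.g.\ if $2$ is a coloop of $\M$, then $2\notin J_1$, yet $J_1$ can be $\{1,3,\dots\}$ giving $I_1=\{1,3\}$ with $1<_1 2<_1 3$). The set that controls the interleaving is the set of \emph{holes} $H:=\{b\in[n]\setminus I_a\mid b<_a i\text{ for some }i\in I_a\}$, which lies in $[n]\setminus J_a$. The lemma you actually need, and which is the content of the paper's proof, is that \emph{every} $K\in\M$ disjoint from $I_a$ contains all of $H$. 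To see this: $J:=[n]\setminus K$ is a basis of $\M^*$ containing $I_a$, so $J_a\leq_a J$; if $J$ also contained a hole $b$, then $J$ would have at least $m+1$ elements $\leq_a\max_{\leq_a}I_a$, which is the $m$-th element of $J_a$, contradicting $J_a\leq_a J$ in position $m+1$. Once this is in hand, $K$ meets the $\leq_a$-interval from $a$ to $\max_{\leq_a}I_a$ in exactly $H$, and the remaining elements of $K$ are $\leq_a$-above all of $I_a$, so the $\leq_a$-interleaving pattern of $K$ with $I_a$ is independent of $K$.

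A secondary point: your parenthetical that passing between the usual order and $\leq_a$ ``only changes all signs by a global constant'' is not true termwise. The difference in inversion count between the two orders, for a fixed pair $(K,I_a)$, is $u(m-2p)+kp$ where $u=|K\cap[1,a-1]|$ and $p=|I_a\cap[1,a-1]|$, and this depends on $K$ through $u$. It is constant mod $2$ only because $m$ is even, which is a standing hypothesis in the paper. This parity is not an idle technicality: the proposition genuinely fails for odd $m$. For instance with $m=1$, $k=1$, $n=3$, $\M$ uniform, and $a=2$, one computes $\langle 2\rangle=\Delta_1(C)\Delta_{12}(Z)-\Delta_3(C)\Delta_{23}(Z)$, which changes sign on $\Gr(1,3)_{>0}$.
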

\begin{proof}
Let $C \in \Pi_{\M,>0}$.  We have
$$
\langle I_a \rangle = \sum_{K \in \M} \Delta_K(C) \Delta_{K I_a}(Z).
$$
Let the Grassmann necklace of $\M^*$ be $(J_1,\ldots,J_n)$.  By \cref{prop:twistGN}, we have that $I_a$ are the $m$ smallest elements of $J_a$ with respect to $\leq_a$.  Now let $K_a := [n] \setminus J_a$, which is the maximal base of $\M$ with respect to $\leq_a$.  Then $K_a$ must contain all the holes in $I_a$: that is all elements $b \in [n]\setminus I_a$ such that $b <_a i$ for some $i \in I_a$.  Indeed, by maximality of $K_a$, this is true for all $K \in \M$ such that $K \cap I_a = \emptyset$.  It follows that for such $K$, the sign of $\Delta_{K I_a}(Z)$ depends only on $\N$ and $a$.  We know that $\Delta_K(C) \geq 0$ for all $K$ and $\Delta_{K_a}(C) >0$.  Thus $\langle I_a \rangle \neq 0$, and the sign of $\langle I_a \rangle$ depends only on $\N$ and $a$. 
\end{proof}

Let $\M = \M_\I$ have Grassmann necklace $(I_1,\ldots,I_n)$.  Then $\oPi_\M$ is the open subset of $\Pi_\M$ where $\Delta_{I_a}$ does not vanish for any $a \in [n]$.

\begin{corollary}\label{cor:varphi}
Let  $\M\in Q_{n,k}$ and $\N= \M^{\downarrow m}$.  Then
$ \varphi_m( \Pi_{\M,>0}) \subset \oPi_{\env(\N)}$.
\end{corollary}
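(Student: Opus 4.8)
The plan is to obtain \cref{cor:varphi} by combining \cref{lem:down}, which places the image of a positroid variety in a closed positroid variety, with \cref{prop:boundarysign}, which shows that the relevant boundary divisors are not met on the cell. Let $(I_1,\ldots,I_n)$ be the Grassmann necklace of $\N = \M^{\downarrow m}$. Since $\env(\N)$ is the unique positroid with the same Grassmann necklace as $\N$, the sequence $(I_1,\ldots,I_n)$ is also the Grassmann necklace of $\env(\N)$, and by the description recalled just before this corollary, $\oPi_{\env(\N)}$ is precisely the open locus of $\Pi_{\env(\N)}$ on which none of the Pl\"ucker coordinates $\Delta_{I_1},\ldots,\Delta_{I_n}$ vanishes.

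First I would check that $\varphi_m$ is defined on all of $\Pi_{\M,>0}$, so that \cref{lem:down} applies. Fix $C \in \Pi_{\M,>0}$. By \cref{prop:boundarysign}, the twistor coordinate $\langle I_a \rangle$ is nonzero for every $a$; in particular some twistor coordinate of $C$ is nonzero. On the other hand, $\varphi_m$ fails to be defined at $C$ exactly when the row span of $C$ meets $\ker(Z)$ nontrivially, and in that case $Y = CZ$ has rank $<k$, so every twistor coordinate $\langle I \rangle = \det(Y Z_{i_1}\cdots Z_{i_m})$ vanishes. Hence $\varphi_m$ is defined on $\Pi_{\M,>0}$.

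The main step is then immediate: since $\Pi_{\M,>0} \subset \Pi_\M$ and $\varphi_m$ is defined on it, \cref{lem:down} gives $\varphi_m(C) \in \Pi_{\env(\N)}$ for every $C \in \Pi_{\M,>0}$. To upgrade this to membership in the \emph{open} positroid variety I would invoke \cref{prop:boundarysign} a second time: by definition of the twistor map, $\Delta_{I_a}(\varphi_m(C)) = \langle I_a\rangle \neq 0$ for all $a \in [n]$, so $\varphi_m(C)$ avoids every boundary divisor cutting out $\Pi_{\env(\N)} \setminus \oPi_{\env(\N)}$. Therefore $\varphi_m(C) \in \oPi_{\env(\N)}$, which gives $\varphi_m(\Pi_{\M,>0}) \subset \oPi_{\env(\N)}$.

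I do not anticipate a serious obstacle: the statement is essentially a repackaging of \cref{lem:down} and \cref{prop:boundarysign}. The only point that needs a sentence of care is the verification that $\varphi_m$ is defined on the entire cell --- without it \cref{lem:down} cannot be applied --- and even this reduces to the non-vanishing of a single twistor coordinate, which is already provided by \cref{prop:boundarysign}.
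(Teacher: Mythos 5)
Your argument is correct and is exactly the paper's implicit route: \cref{lem:down} places $\varphi_m(C)$ in $\Pi_{\env(\N)}$, and \cref{prop:boundarysign} together with the fact that $\N$ and $\env(\N)$ share a Grassmann necklace shows that the necklace Pl\"ucker coordinates of $\varphi_m(C)$ are nonzero, hence $\varphi_m(C) \in \oPi_{\env(\N)}$. Your preliminary check that $\varphi_m$ is defined on $\Pi_{\M,>0}$ (via the nonvanishing of $\langle I_a\rangle$ forcing $V\cap\ker(Z)=0$) is a sound way to make explicit a point the paper leaves tacit.
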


Now, assume that $m=2$.  Motivated by the preceding results, we define the face strata of $A_{n,k,2}$ as follows.
\begin{definition}\label{def:face}
The face stratification of $A_{n,k,2}$ is the intersection of $A_{n,k,2}$ with the positroid stratification of $\Gr(2,n)$.
\end{definition}

\begin{remark}
For even $m>2$, the strata defined in \cref{def:face} could still be interesting (cf. \cite[Section 9]{GLparity}).  However, we would not recover the full combinatorics of faces of $A_{n,k,m}$.  For $m=4$, there are facets defined by the hyperplanes $\langle i i+1 j j+1 \rangle = 0$, which are not part of the positroid stratification.  It would be interesting to study the stratification on $\Gr(4,n)$ generated by all such hyperplanes.  One could speculate that the face poset of $A_{n,k,4}$ is a subposet of this stratification (and similarly for other $m$). 
\end{remark}

For $\N \in Q_{n,2}$, let $A_\N:= A_{n,k,2} \cap \oPi_{\N}$ be the intersection of the amplituhedron in twistor space with the open positroid variety.  Note that we are considering real, but not necessarily positive, points in $\oPi_{\N}$.

\begin{theorem}\label{thm:main}
We have $A_{\N} \neq \emptyset$ if and only if $\N \in P_{n,k}$.  Furthermore, we have 
\begin{equation}
\label{eq:closure}
\overline{A_{\N}} = \bigsqcup_{\N' \leq \N} A_{\N'}.
\end{equation}
\end{theorem}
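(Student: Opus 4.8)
The plan is to combine the matroid-theoretic results of Sections 3--4 with the sign-flip characterization of the amplituhedron from \cref{thm:sign-flip}. First I would establish that each nonempty face $A_{\N}$ with $\N\in P_{n,k}$ actually arises: given $\N = \M^{\downarrow 2}$ for some positroid $\M\in Q_{n,k}$ (which is possible precisely because $\N\in P_{n,k}$, by \cref{def:Pnkm} and \cref{prop:envenv}), \cref{cor:varphi} gives $\varphi_2(\Pi_{\M,>0})\subset\oPi_{\env(\N)}$. It remains to check that this image is all of $A_{n,k,2}\cap\oPi_{\env(\N)}$ and is nonempty; nonemptiness follows since $\Pi_{\M,>0}$ is nonempty and $\varphi_2$ is defined on a dense subset of it (the indeterminacy locus $\dim(V\cap\ker Z)\geq 1$ is a proper closed condition, and one must verify it does not swallow all of $\Pi_{\M,>0}$ --- here the positivity of $Z$ and of the Pl\"ucker coordinates of $C$ is used). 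Then I would argue that the stratum $A_{\N}$ is connected and equal to the image, using that $\oPi_{\env(\N)}$ intersected with $A_{n,k,2}$ lies in the closure of $\oPi_{+,k}$ (from \cref{thm:sign-flip}) and that, within a single positroid cell of $\Gr(k,n)_{\ge 0}$, the sign-flip pattern of the twistor coordinates $\langle I_a\rangle$ is constant by \cref{prop:boundarysign}; conversely any real point of $\oPi_{\env(\N)}$ in $\overline{\oPi_{+,k}\cap\Gr(2,\sp Z)}$ must be a limit of points in the interiors, hence in the image of the corresponding positroid cell.

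Next I would prove the closure formula \eqref{eq:closure}. The inclusion $\bigcup_{\N'\le\N}A_{\N'}\subseteq\overline{A_{\N}}$ should follow from \cref{thm:Pnk}(1): since $P_{n,k}$ is an upper order ideal of $Q_{n,2}$ and is graded with the explicit rank function $d(\N)$, and since the positroid stratification of $\Gr(2,n)$ is a stratification (closure of a stratum is a union of strata), it suffices to show that whenever $\N'\lessdot\N$ is a covering relation in $Q_{n,2}$ with both in $P_{n,k}$, the stratum $A_{\N'}$ meets $\overline{A_{\N}}$. One way: realize $\N=\M^{\downarrow 2}$ and $\N'=(\M')^{\downarrow 2}$ with $\M'\lessdot\M$ a covering relation in $Q_{n,k}$ --- this requires knowing the covering relations in $P_{n,k}$ are induced from those in $Q_{n,k}$ via the twistor map, which should come out of the proof of \cref{thm:Pnk} together with \cref{cor:orderpres} (order-preservation) --- and then push forward the known closure relation $\overline{\Pi_{\M',>0}}\supseteq\Pi_{\M,>0}$ through $\varphi_2$, using continuity of $\varphi_2$ where defined and a limiting argument at the indeterminacy locus. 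For the reverse inclusion $\overline{A_{\N}}\subseteq\bigcup_{\N'\le\N}A_{\N'}$: a point in $\overline{A_{\N}}$ lies in $A_{n,k,2}$ (which is closed, being the image of a compact set under a continuous map on the locus where it is defined --- one should note $A_{n,k,2}$ is compact), hence in some $A_{\N'}$ with $\N'\in P_{n,k}$; and it lies in $\overline{\oPi_{\N}}=\bigcup_{\N'\le\N}\oPi_{\N'}$, so $\N'\le\N$ in $Q_{n,2}$.

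The main obstacle I expect is the surjectivity statement that $\varphi_2(\Pi_{\M,>0})$ equals the \emph{entire} real stratum $A_{n,k,2}\cap\oPi_{\env(\N)}$, rather than just being contained in it --- equivalently, that $A_{n,k,2}$ decomposes cleanly along the positroid stratification with exactly the strata $A_{\N}$, $\N\in P_{n,k}$, each nonempty and connected. The containment direction is handled by \cref{cor:varphi}; the reverse requires controlling which real (non-positive) points of $\oPi_{\env(\N)}$ actually appear as twistor images, and this is where \cref{thm:sign-flip} and \cref{prop:GL} do the real work: one must show that a real point $y\in\oPi_{\N}\cap\overline{\oPi_{+,k}}\cap\Gr(2,\sp Z)$ is automatically a twistor image of a point in $\Gr(k,n)_{\ge 0}$, and moreover of a point in the specific cell $\Pi_{\M,>0}$ with $\M^{\downarrow 2}=\N$. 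I would approach this by a dimension count plus a connectedness/openness argument: show $\varphi_2$ restricted to $\Pi_{\M,>0}$ is a submersion onto its image of the expected dimension $d(\N)$ (matching \cref{thm:Pnk}(3)), and that the image is open and closed in $A_{n,k,2}\cap\oPi_{\env(\N)}$, hence a union of its connected components; then rule out extra components using the sign-flip description. A secondary technical point is verifying that the positroid envelope $\env(\N)$ versus $\N$ itself causes no discrepancy --- i.e.\ that $A_{n,k,2}\cap\oPi_{\N}$ is empty unless $\N$ is a positroid, which is automatic since we intersect with the positroid stratification, and that $\env(\M^{\downarrow 2})=\M^{\downarrow 2}$ when we take $\N\in P_{n,k}$ to already be a positroid.
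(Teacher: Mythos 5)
You have over-engineered the core step and in the process manufactured an obstacle that does not exist. The worry that one must prove $\varphi_2(\Pi_{\M,>0})$ covers the \emph{entire} stratum $A_{n,k,2}\cap\oPi_{\N}$ --- and that this requires the sign-flip theory of \cref{prop:GL} and \cref{thm:sign-flip}, a dimension count, a submersion argument, and a connectedness argument --- is misplaced. The amplituhedron $A_{n,k,2}$ is by definition $\varphi_2(\Gr(k,n)_{\geq 0})$; the totally nonnegative Grassmannian is partitioned by the positroid cells $\Pi_{\M,>0}$; and \cref{cor:varphi} places each image $\varphi_2(\Pi_{\M,>0})$ inside a single open positroid stratum $\oPi_{\env(\M^{\downarrow 2})}$. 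Since the open strata are disjoint, intersecting $A_{n,k,2}$ with $\oPi_\N$ automatically yields
$$
A_\N \;=\; A_{n,k,2}\cap \oPi_\N \;=\; \bigcup_{\tilde\M\,:\ \env(\tilde\M^{\downarrow 2})=\N} \varphi_2(\Pi_{\tilde\M,>0}),
$$
with no surjectivity left to check; the equality is forced by the partition. In particular \cref{prop:GL} and \cref{thm:sign-flip} play no role in the paper's proof of this theorem, and neither does $d(\N)$. (Your indeterminacy worry is also moot: for positive $Z$ the kernel $\ker Z$ meets no point of $\Gr(k,n)_{\geq 0}$, so $\varphi_2$ is defined on all of the compact set being pushed forward; this is standard and does not need a fresh argument here.)

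For the closure formula the paper's route is likewise shorter than yours. Set $\M=\N^{\uparrow k}$, the \emph{largest} positroid in $Q_{n,k}$ with $\M^{\downarrow 2}=\N$ (this maximality is what the proof of \cref{thm:Pnk} actually provides). Every $\tilde\M$ appearing in the union above satisfies $\tilde\M\leq\M$, so $\Pi_{\tilde\M,>0}\subset\overline{\Pi_{\M,>0}}$, and by continuity and compactness $\overline{A_\N}=\varphi_2\bigl(\overline{\Pi_{\M,>0}}\bigr)$. One then applies $\varphi_2$ to the face decomposition $\overline{\Pi_{\M,>0}}=\bigsqcup_{\M'\leq\M}\Pi_{\M',>0}$ and re-sorts the pieces by target stratum, using order-preservation of $\M\mapsto\env(\M^{\downarrow 2})$ (\cref{cor:orderpres}) for one inclusion and order-preservation of $\N'\mapsto(\N')^{\uparrow k}$ for the other. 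You do not need to match covering relations in $P_{n,k}$ with covering relations in $Q_{n,k}$, nor to take limits at the indeterminacy locus; the two order-preservation facts plus maximality of $\N^{\uparrow k}$ suffice.
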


\begin{proof}
By the positroid stratification $
\Gr(k,n)_{\geq 0} = \bigsqcup_\M \Pi_{\M,>0}
$ and 
\cref{cor:varphi}, we have 
$$
A_{n,k,2}= \varphi_2(\Gr(k,n)_{\geq 0}) = \bigcup_{\M \in Q_{n,k}} \varphi_2(\Pi_{\M,>0}) \subset \bigcup_{\N \in P_{n,k}} \oPi_\N.
$$
Let $\N \in P_{n,k}$ and $\M = \N^{\uparrow k}$.  Then $\varphi_2(\Pi_{\M,>0}) \subset A_{\N}$, establishing the first statement.  We have
\begin{equation}\label{eq:closure2}
\overline{A_{\N}} = \overline{\varphi_2(\bigcup_{\tilde \M \mid \env(\tilde \M^{\downarrow 2})= \N} \Pi_{\tilde \M,>0})} = \varphi_2(\overline{\Pi_{\M,>0}}) = \varphi_2(\bigcup_{\M' \leq \M} \Pi_{\M,>0}) =  \bigcup_{\N' \leq \N} A_{\N'}.
\end{equation}
For the second equality, we have used $\overline{\Pi_{\M,>0}} = \bigsqcup_{\M' \leq \M} \Pi_{\M,>0}$, where the union is over positroids $\M'$ contained in $\M$.
For the last equality, specifically the containment $\supset$, we have used that $\N' \leq \N$ implies $(\N')^{\uparrow k} \leq \N^{\uparrow k}$, and that $\M' = \N'^{\uparrow k}$ is the largest matroid satisfying $\env((\M')^{\downarrow 2}) = \N'$.
\end{proof}

By \cref{thm:semi}(2) below, for generic $Z$, the intersection $\oPi_{\N} \cap \Gr(2,\sp(Z))$ has dimension $d(\N)$ and codimension $c(\N)$ inside $\Gr(2,\sp(Z))$.  We expect that (regardless of whether $Z$ is generic) $A_\N$ is full-dimensional in $\oPi_{\N} \cap \Gr(2,\sp(Z))$.

\begin{conjecture}\label{conj:main}
The stratum $A_\N$ (resp. $\overline{A_\N}$) is an open (resp. closed) ball of dimension $d(\N)$.  The space $A_{n,k,2}$, together with the stratification $\{A_\N\}$ is a regular CW-complex homeomorphic to a closed ball of dimension $2k$, and furthermore, it is a positive geometry \cite{LamOPAC,ABL}.
\end{conjecture}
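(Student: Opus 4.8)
The plan is to prove \cref{conj:main} by induction on the rank function $d(\N)$, establishing in turn that each open stratum $A_\N$ is an open ball of dimension $d(\N)$, that each closed stratum $\overline{A_\N}$ is a regular CW ball, and finally the positive-geometry recursion. For the open strata I would work with the inequality (``sign-flip'') description of the $m=2$ amplituhedron from \cite{ATT,BH,KW,LPW,PSBW} together with \cref{thm:sign-flip}: inside the smooth slice $\oPi_\N\cap\Gr(2,\sp(Z))$, of dimension $d(\N)$ for generic $Z$ by \cref{thm:semi}, the stratum $A_\N$ is the locus where the twistor coordinates take the fixed signs forced by \cref{prop:boundarysign} and the sign-flip count of \cref{prop:GL}. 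One then shows this semialgebraic set is nonempty, full-dimensional, and homeomorphic to a ball, either by an explicit semialgebraic parametrization in cluster coordinates on $\oPi(2,n)$ (in the spirit of available treatments of the $m=1$ case), or inductively from \cref{thm:main}: there $\overline{A_\N}=\varphi_2(\overline{\Pi_{\M,>0}})$ for $\M=\N^{\uparrow k}$ realizes $\overline{A_\N}$ as the continuous image of the closed ball $\overline{\Pi_{\M,>0}}$ \cite{GKL1,GKL3}, and if the fibres of $\varphi_2$ over the interior are shown to be cell-like, the image is again a ball by a Vietoris--Begle/Smale argument.

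With the open strata in hand, the induction on $d(\N)$ has base case $d(\N)=0$ (a point), and inductive step as follows. By \cref{thm:main}, $\partial\overline{A_\N}=\bigcup_{\N'<\N}A_{\N'}$ is a union of closed balls glued along the lower interval $\{\N'\le\N\}$ of $P_{n,k}$. The combinatorial input is that $P_{n,k}$ is Eulerian (\cref{thm:Eulerian}), hence all lower intervals are thin; upgrading this to EL-shellability of $P_{n,k}$ — which I expect follows either by transporting a shelling from affine Bruhat order through the order-preserving map of \cref{cor:orderpres}, or directly from the explicit description of rank-$2$ positroids in \cref{thm:Pnk} — shows that every open lower interval has spherical order complex. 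Combining this with the fact that each $\overline{A_{\N'}}$ is a regular cell with spherical boundary, Björner's recognition theorem for CW posets, and the ball-versus-sphere criterion of \cite{GKL1,GKL3} (contractible cell closures, spherical links, and a unique cell covered by the adjoined top element — here the uniform rank-$2$ positroid on $[n]$, the unique maximum of $P_{n,k}$, of rank $2k$), one concludes that $\overline{A_\N}$, and in particular $A_{n,k,2}$ itself, is a regular CW complex homeomorphic to a closed ball of dimension $2k$.

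For the positive-geometry assertion, the triangulations of \cite{BH,KW,LPW,PSBW} express $A_{n,k,2}$ as a union of images $\varphi_2(\Pi_{\M_t,>0})$ on which $\varphi_2$ is injective with dense image; summing the associated $d\log$ forms produces a candidate canonical form $\Omega=\Omega(A_{n,k,2})$, well defined because residues along internal walls cancel. It remains to verify the defining recursion $\operatorname{Res}_{\overline{A_\N}}\Omega=\Omega(\overline{A_\N})$ on the codimension-one strata, which by \cref{thm:Pnk} are precisely the $n$ facets $\{\langle a\,a{+}1\rangle=0\}$; I would do this by induction on dimension, after identifying each facet $\overline{A_\N}$ with a lower amplituhedron-type positive geometry (whose boundary combinatorics is read off from \cref{thm:Pnk}) and comparing triangulations, the residue at $\langle a\,a{+}1\rangle=0$ being governed by the vanishing of that Pl\"ucker coordinate exactly as in the BCFW recursion. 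Uniqueness of a form with these residues and no other poles is then the standard argument on the rational variety $\Gr(2,\sp(Z))$, giving that $\bigl(\overline{\oPi_{+,k}\cap\Gr(2,\sp(Z))},\,A_{n,k,2}\bigr)$ is a positive geometry in the sense of \cite{ABL}.

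The step I expect to be the main obstacle is the analytic regularity in the induction on closed strata — upgrading ``$\overline{A_\N}$ is a contractible continuous image of a ball'' to ``$\overline{A_\N}$ is a topological manifold with boundary whose interior is $A_\N$ and whose boundary is the expected sphere $\bigcup_{\N'\lessdot\N}\overline{A_{\N'}}$'' — because $\varphi_2$ is highly non-injective and one cannot simply transport the regular cell structure of $\Gr(k,n)_{\geq 0}$. This is the amplituhedron analogue of the difficulty resolved in \cite{GKL1,GKL3}, and would plausibly require either an amplituhedron version of their deformation-retraction and stratified-space arguments, or a direct proof that the relevant fibres of $\varphi_2$ are cell-like. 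Establishing EL-shellability of $P_{n,k}$, and the full-dimensionality of $A_\N$ in its slice for non-generic $Z$, are secondary obstacles.
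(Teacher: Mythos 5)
This statement is labeled a \emph{conjecture} in the paper, and no proof is given --- the author explicitly leaves it open (noting only that the positive-geometry part is known for $k=2$ by \cite{RST}).  So there is no ``paper's own proof'' to compare against, and your proposal should be judged as a plan of attack on an open problem.

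As such, your outline is well-informed and correctly identifies the main ingredients the paper provides as scaffolding: the dimension and interior/boundary statements of \cref{thm:semi}, the Eulerian-ness of $\hP_{n,k}$ from \cref{thm:Eulerian}, the corank-$1$ strata being the $n$ facets $\{\langle a\,a{+}1\rangle=0\}$, and the order ideal structure of $P_{n,k}$ from \cref{thm:Pnk}.  You also honestly flag the real obstacle: because $\varphi_2$ collapses $\Gr(k,n)_{\geq 0}$ to something of much lower dimension, one cannot transport the regular CW structure, and one needs a genuinely new topological argument in the spirit of \cite{GKL1,GKL3}.  This is exactly why the statement remains a conjecture.

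That said, two of your steps are weaker than they look and would need to be replaced by real arguments before this is a proof plan rather than a wish list.  First, the ``cell-like fibres $\Rightarrow$ image is a ball via Vietoris--Begle/Smale'' step does not deliver a homeomorphism: cell-like maps preserve (\v Cech) homotopy/shape, and a proper cell-like map between ANRs is a fine-homotopy equivalence, but turning this into a homeomorphism onto a \emph{ball} requires something substantially more (approximation by homeomorphisms, manifold recognition, or a direct Poincar\'e-conjecture-style argument), and none of that is available off the shelf here.  Second, Björner's CW-poset recognition theorem (which needs thinness \emph{and} shellability of every interval) only tells you that $\hP_{n,k}$ is the face poset of \emph{some} regular CW complex --- it does not tell you that the specific stratification $\{A_\N\}$ realizes it, which is the actual content of the conjecture; and in any case EL-shellability of $\hP_{n,k}$ is itself only conjectured in the paper.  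The positive-geometry part via summing BCFW $d\log$ forms and checking residue cancellation is the most plausible of the three pieces, but ``uniqueness of a form with prescribed residues and no other poles'' on $\Gr(2,\sp(Z))$ would still need an argument (the positroid divisors are not all linearly independent in the Picard group of the boundary, so pole-matching needs care).  In short: your plan points in the right direction, but the central analytic/topological step is the open problem, not a lemma you can invoke.
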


In the case $k=2$, the ``positive geometry" part of \cref{conj:main} follows from \cite{RST}.

Suppose that $X_{\geq 0}$ is a closed semialgebraic subset of an irreducible real algebraic variety $X$ such that $X$ is the Zariski closure of $X_{\geq 0}$.
We define $X_{>0}$ to be the interior of $X_{\geq 0}$ (in the analytic topology on $X$) and the boundary $\partial X_{\geq 0}$ to be $X_{\geq 0} \setminus X_{>0}$.  The \emph{algebraic boundary} $\partial_a X_{\geq 0}$ is the Zariski-closure of $\partial X_{\geq 0}$.  The facets of $X_{\geq 0}$ are the semialgebraic sets obtained by intersecting $\partial X_{\geq 0}$ with the irreducible components of $\partial_a X_{\geq 0}$.  The semialgebraic faces of $X_{\geq 0}$ are the collection of semialgebraic sets obtained by recursively taking facets.

\begin{theorem}\label{thm:semi}
Suppose that $Z$ is generic.  Then 
\begin{enumerate}[label=(\alph*)]
\item for $\N \in P_{n,k}$, we have $\dim(A_\N) = \dim  \overline{A_\N} = d(\N) = \dim(\Pi_\N \cap \Gr(2,\sp(Z)))$.
\item $A_\N$ is the interior of $\overline{A_\N}$, 
\item $\partial \overline{A_\N} = \bigcup_{\N' < \N} A_{\N'}$,
\item $A_\N$ is a connected component of $\oPi_\N \cap \Gr(2,\sp(Z))$.
\end{enumerate}
The semialgebraic faces of $A_{n,k,2}$ agrees with the stratification defined in \cref{def:face}.
\end{theorem}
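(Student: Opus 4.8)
The plan is to first establish (1)–(4) using \cref{thm:sign-flip}, \cref{thm:Pnk}, and \cref{thm:main}, and then deduce the final statement about semialgebraic faces by a downward induction on dimension. The starting point is \cref{thm:sign-flip}, which identifies $A_{n,k,2}$ with $\overline{\oPi_{+,k} \cap \Gr(2,\sp(Z))}$. Since $\oPi_{+,k}$ is (the closure of) a union of connected components of the cluster variety $\oPi(2,n)$, and $\Gr(2,\sp(Z))$ is a generic linear section, for generic $Z$ the intersection $\oPi_{+,k} \cap \Gr(2,\sp(Z))$ is a smooth semialgebraic set, open and dense in $\overline{A_{n,k,2}}$; this gives the $\N = $ top-element case of (1) and (2), with $d$ of the top stratum equal to $2k = \dim \Gr(2,k+2)$. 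For general $\N \in P_{n,k}$, I would combine \cref{thm:main}'s closure formula $\overline{A_\N} = \bigcup_{\N' \le \N} A_{\N'}$ with a dimension count: \cref{prop:GL} says the stratum $\oPi_{+,k}\cap \oPi_\N$ (when nonempty) is cut out inside $\oPi_\N$ by the vanishing of exactly the twistor coordinates forced to vanish on $\oPi_\N$, with the remaining sign-flip pattern an open condition, so it is a full-dimensional semialgebraic subset of $\oPi_\N$; intersecting with the generic plane $\Gr(2,\sp(Z))$ and invoking \cref{thm:Pnk}(3) for $d(\N) = \dim(\oPi_\N \cap \Gr(2,\sp(Z)))$ yields (1). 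Here genericity of $Z$ enters in the usual transversality way: a generic $(k+2)$-plane meets each of the finitely many positroid strata $\oPi_\N$ (and their intersections) in the expected dimension, so $\Pi_\N \cap \Gr(2,\sp(Z))$ has dimension $d(\N)$ and no $A_{\N'}$ for $\N' \not\le \N$ accidentally appears in $\overline{A_\N}$.

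For (2) and (4): once (1) is known, $A_\N$ is a full-dimensional locally closed semialgebraic subset of the irreducible variety $\Pi_\N \cap \Gr(2,\sp(Z))$; by \cref{thm:main} the boundary $\overline{A_\N}\setminus A_\N$ is the union of the $A_{\N'}$ with $\N' < \N$, each of strictly smaller dimension by \cref{thm:Pnk}(3), hence $A_\N$ is open in $\overline{A_\N}$, proving (2) and hence (3). For (4), I would argue that $A_\N$ is not just open but also closed in $\oPi_\N \cap \Gr(2,\sp(Z))$: the sign-flip characterization of \cref{prop:GL}/\cref{thm:sign-flip} describes $\oPi_{+,k}\cap\oPi_\N\cap\Gr(2,\sp(Z))$ as the locus where a fixed list of twistor coordinates (those nonvanishing on $\oPi_\N$) has a prescribed number of sign changes with no zeros; this is a union of connected components of $\oPi_\N\cap\Gr(2,\sp(Z))$, and \cref{thm:main} plus \cref{thm:Pnk}(1) (the preimage matroid $\N^{\uparrow k}$ is the unique maximal one) shows $A_\N$ is exactly one such component.

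For the final assertion, that the semialgebraic faces of $\overline{A_{n,k,2}}$ coincide with $\{A_\N : \N \in P_{n,k}\}$, I would induct downward on $d(\N)$. The top stratum is $A_{\N_{\mathrm{top}}} = A_{n,k,2}$ itself, whose interior is $A_{\N_{\mathrm{top}}}$ by (2) with $\N = \N_{\mathrm{top}}$. Given that $\overline{A_\N}$ is a face, (2) and (3) show its topological boundary is $\bigcup_{\N'<\N}A_{\N'}$; the algebraic boundary is then the Zariski closure $\bigcup_{\N'<\N}\Pi_{\N'}\cap\Gr(2,\sp(Z))$, whose irreducible components are the $\Pi_{\N'}\cap\Gr(2,\sp(Z))$ for $\N'$ maximal among elements below $\N$ (using genericity of $Z$ so these sections stay irreducible and no containments collapse), and intersecting back with the boundary recovers exactly the $\overline{A_{\N'}}$ for those covering-down $\N'$. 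Iterating gives all $A_\N$, $\N\in P_{n,k}$, and nothing else; the poset structure matches \cref{eq:closure}. The main obstacle I anticipate is making the genericity hypothesis on $Z$ do precisely the work needed in (1) and in the algebraic-boundary step — namely that a generic $(k+2)$-plane is simultaneously transverse to every positroid stratum of $\Gr(2,n)$ and to all their pairwise intersections, so that dimensions, irreducibility, and the component structure of \cref{prop:GL} are all preserved after the linear section; this is a stratified Bertini/transversality argument over $\R$, and one must be slightly careful that ``generic $Z$'' (meaning outside a proper subvariety of the space of positive matrices) suffices rather than merely ``generic $(k+2)$-plane,'' but since the positive matrices form a full-dimensional semialgebraic subset of all $n\times(k+2)$ matrices this causes no real difficulty.
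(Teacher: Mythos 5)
Your proposal diverges from the paper's proof in several places, and two of the divergences are genuine gaps rather than alternate routes.

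First, part (1) is where the real work of the theorem lives, and your argument does not prove it.  You invoke ``\cref{thm:Pnk}(3) for $d(\N) = \dim(\oPi_\N \cap \Gr(2,\sp(Z)))$,'' but \cref{thm:Pnk}(3) is a purely combinatorial statement about the rank function of the poset $P_{n,k}$; it says nothing about dimensions of varieties.  What must be proved is precisely that the intersection $\Pi_\N \cap \Gr(2,\sp(Z))$ has codimension $c(\N)$, i.e.\ that $c(\N)$ equals $\codim_{\Gr(2,n)}\Pi_\N$ and that a generic $Z$ achieves the expected codimension and non-emptiness.  A stratified Bertini argument handles the expected-codimension part, but non-emptiness is genuinely subtle here (as \cref{prop:residual} shows, some positroid strata do have non-empty intersection with $\Gr(2,\sp(Z))$ even outside $P_{n,k}$, and the intersection pattern is delicate).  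The paper resolves this by an explicit cohomology-class computation in $H^*(\Gr(2,n))$ via affine Stanley symmetric functions (\cref{prop:LamAS}, \cref{prop:dimX}, \cref{prop:dimest}): the class of $\Pi_\N$ is $e_2^{|L|}\prod_i h_{d_i}$, one checks which Schur functions in this expansion survive restriction to $\Gr(2,\sp(Z))$, and reads off the codimension.  None of this appears in your proposal.

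Second, you repeatedly use $\oPi_{+,k}\cap\oPi_\N$ to describe the lower strata $A_\N$, but $\oPi_{+,k}$ is by definition a union of connected components of the \emph{top} open positroid cell $\oPi(2,n)$, where every $\langle i\,i{+}1\rangle$ is nonzero.  Hence $\oPi_{+,k}\cap\oPi_\N=\emptyset$ whenever $\N$ is not the top positroid, so the sign-flip characterization of \cref{prop:GL}/\cref{thm:sign-flip} gives no direct description of the boundary strata — one must instead understand how the closure of $\oPi_{+,k}\cap\Gr(2,\sp(Z))$ meets the boundary positroid cells.  Your argument for (4) therefore does not go through as written.  The paper establishes (4) differently: it first identifies the algebraic boundary $\partial_a A_{n,k,2}$ as exactly the $n$ positroid divisors $\langle i\,i{+}1\rangle=0$ (using \cite{RST}, the triangulations of \cite{BH,PSBW}, and the cyclic symmetry of the sign-flip criterion), shows the stratification these generate is the positroid stratification pulled back to $\Gr(2,\sp(Z))$, and then uses connectedness of $\Pi_{\M,>0}$ (pushed through $\varphi_2$) together with the dimension estimate (1) to deduce that $A_\N$ is a whole connected component of $\oPi_\N\cap\Gr(2,\sp(Z))$.

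Third, the logical order of (2)--(4) matters.  You try to derive (2) first from a dimension-drop argument (``boundary has lower dimension, hence $A_\N$ is open''), but that inference is false for general semialgebraic sets — being full-dimensional with lower-dimensional complement in the closure does not imply openness.  The paper instead proves (4) first, which gives openness of $A_\N$ in $\oPi_\N\cap\Gr(2,\sp(Z))$ for free, and then obtains the boundary containment $\bigcup_{\N'<\N}A_{\N'}\subset\partial\overline{A_\N}$ from \cref{prop:boundarysign} (the twistor coordinates $\langle I_a\rangle$ have constant nonzero sign on each $\Pi_{\M,>0}$, so crossing into a lower stratum must send one of them to zero).  \cref{prop:boundarysign} does not appear in your proposal, but it is the ingredient that actually forces the lower strata to be on the boundary rather than in the interior.

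In summary: your outline is reasonable at the level of ``what needs to happen,'' but (1) requires the Schur-expansion / affine-Stanley computation from \cref{sec:dim}, the sign-flip locus $\oPi_{+,k}$ cannot be used to describe lower strata, and (2)--(3) must be deduced from (4) and \cref{prop:boundarysign}, not the reverse.
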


\begin{proof}
The dimension estimate (a) will be proved in \cref{sec:dim}.  We prove (b), (c), (d) assuming (a).
We first consider $A_{n,k,2}$ itself, which has Zariski-closure the Grassmannian $\Gr(2,\sp(Z))$.  We show that the Zariski-closure $\partial_a A_{n,k,2}$ of $\partial A_{n,k,2}$ in $\Gr(2,\sp(Z))$ is the union of the $n$ positroid divisors $\langle 12 \rangle = 0, \langle 23 \rangle = 0, \ldots, \langle n1 \rangle = 0$ intersected with $\Gr(2,\sp(Z))$.  We follow the argument \cite[Proof of Proposition 3.1]{RST}.  First, the ``triangulations" of $A_{n,k,2}$ from \cite{BH,PSBW} show that $A_{n,k,2}$ is the closure of an open subset of $\Gr(2,\sp(Z))$.  Thus $A_{n,k,2}$ is the closure of its interior.  It follows as in \cite[Proof of Proposition 3.1]{RST} that the boundary of $A_{n,k,2}$ is pure of codimension 1.  The only possible divisors appearing in $\partial_a A_{n,k,2}$ are the vanishing loci of $\langle 12 \rangle, \langle 23 \rangle, \ldots, \langle n1 \rangle$ or $\langle 13 \rangle, \langle 14 \rangle, \ldots$, since these are the inequalities that appear in \cref{thm:sign-flip}.  However, by cyclic symmetry \cref{thm:sign-flip} can also be formulated using sign-flips of the sequence $\langle 23 \rangle, \langle 24 \rangle,\ldots$; see also \cite[Section 5]{ATT}.  Since we know the boundary is codimension one, only $\langle 12 \rangle, \langle 23 \rangle, \ldots, \langle n1 \rangle$ can appear.  Indeed, all of them appear -- the divisor $\langle i (i+1) \rangle$ contains $A_\N$ where $\N = (\{\},\{[i,i+1]\})$ satisfies $c(\N) = 1$.

Given a list $X_1,\ldots,X_r$ of subvarieties of a variety $X$, we can consider the stratification generated by $\{X_1,\ldots,X_r\}$: we repeatedly intersect the subvarieties, breaking the intersections into irreducible components, and repeat.  For the $n$ positroid divisors of $\Gr(2,n)$, one obtains the positroid stratification \cite{KLS}.  

By \cite[Theorem 5.13]{KLS}, the positroid variety $\Pi_\M$ is normal.  By Kleiman transversality \cite[Remark 7]{Kle}, for a general $Z$ (i.e., for $Z$ a matrix belonging to a dense Zariski-open subset of all matrices), the intersection $\Gr(2,\sp(Z)) \cap \Pi_\M$ is normal, and by the argument in \cite[Remark 3.4.11(1)]{FOV} it is irreducible (except possibly when the intersection has expected dimension 0, in which case it can be checked directly).  Thus the stratification generated by $\partial_a A_{n,k,2}$ is the stratification in $\Gr(2,\sp(Z))$ induced by intersecting with the positroid stratification of $\Gr(2,n)$.  For simplicity, we still call this the positroid stratification.

The semialgebraic faces of $A_{n,k,2}$ are generated by recursively intersecting boundaries of faces of $A_{n,k,2}$ with the various pieces of the positroid stratification.  (Of course, many such intersections will be empty.) By (a), $A_\N$ is Zariski dense in $\Pi_\N \cap \Gr(2,Z)$.  By \eqref{eq:closure2} and the connectedness of $\Pi_{\M,>0}$, the face $A_\N$ is connected.  Since the only boundaries of $A_{n,k,2}$ are the positroid divisors, $A_\N$ must thus be a connected component of $\oPi_\N \cap \Gr(2,\sp(Z))$, proving (d).  It also follows that the strata $\overline{A_\N}$ are the only candidates for the faces of $A_{n,k,2}$.

Now, since $A_\N$ is a connected component, it is open in $\Pi_\N \cap \Gr(2,\sp(Z))$, and it must be contained in the interior of $\overline{A_\N}$.  On the other hand, it follows from \cref{prop:boundarysign} that $\bigcup_{\N' < \N} A_{\N'}$ must belong to the boundary of $A_\N$.  Statements (b) and (c) follow.  We conclude by induction on $c(\N)$ that each $\overline{A_\N}$ is part of the semialgebraic face stratification of $A_{n,k,2}$, proving the last statement.
\end{proof}

Let $(I_1,\ldots,I_n)$ be the Grassmann necklace of $\N$.  Let $F_a:= \overline{A_\N} \cap H_a$ denote the intersection of $\overline{A_\N}$ with the hyperplane $H_a = \{\langle I_a \rangle = 0\}$.
Note that $F_a$ is always contained in the boundary of $\overline{A_\N}$, but may not be codimension one.  For example, if $\N = (\{1,2\},\{[5,6]\})$ and $k=3$, then the cocovers of $\N$ are $(\{1,2,5\},\{\})$ and $(\{1,2,6\},\{\})$.  Since $I_3 = \{3,4\}$, neither of these strata belong to $F_3$.  Also, it is possible for $F_a$ to have multiple components, as is already the case for $\Gr(2,n)_{\geq 0}$.  For example, if $\N= (\emptyset,\{[1,2]\})$ then $F_1$ contains both of the strata indexed by $(\{1\},\{\})$ and $(\emptyset,\{[1,3]\})$.  However, we do have the following.

\begin{proposition}
For $\N' \lessdot \N$, we have $\overline{A_{\N'}} \subseteq F_a$ for some $a$.
\end{proposition}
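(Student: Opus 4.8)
The plan is to reduce the statement to the combinatorial claim that the Grassmann necklace $(I_1,\dots,I_n)$ of $\N$ contains an element that is not a basis of $\N'$; once such an index $a$ is found, the inclusion $\overline{A_{\N'}} \subseteq F_a$ follows formally. So I would start with the combinatorics. Since $\N' \lessdot \N$ in particular means $\N' \subsetneq \N$ as collections of bases, and the $a$-th element $I'_a$ of the Grassmann necklace of $\N'$ is the $\leq_a$-minimal basis of $\N'$, a minimum over the smaller collection $\N'$ can only be larger: $I_a \leq_a I'_a$ for every $a$. Now suppose for contradiction that every $I_a$ is a basis of $\N'$. Then $I'_a \leq_a I_a$ by minimality of $I'_a$, so $I_a = I'_a$ for all $a$; thus $\N$ and $\N'$ have the same Grassmann necklace, and since $\M \mapsto \I(\M)$ is a bijection between rank-$2$ positroids on $[n]$ and $(2,n)$-Grassmann necklaces, this forces $\N' = \N$, contradicting $\N' \lessdot \N$. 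Hence some $I_a$ fails to be a basis of $\N'$.

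It then remains to transfer this to the amplituhedron. By the closure relation \eqref{eq:closure} in \cref{thm:main}, applied to $\N' \leq \N$, we have $\overline{A_{\N'}} \subseteq \overline{A_\N}$, so it suffices to show $\overline{A_{\N'}} \subseteq H_a$ for the index $a$ just produced. Every point of $A_{\N'} = A_{n,k,2} \cap \oPi_{\N'}$ lies in the positroid variety $\Pi_{\N'} \subseteq \Gr(2,n)$, and there every Pl\"ucker coordinate labeled by a non-basis of $\N'$ vanishes --- the same fact used in the proof of \cref{lem:down}. Since $\langle I_a \rangle = \Delta_{I_a}$ as a function on $\Gr(2,n) \supseteq A_{n,k,2}$ and $I_a \notin \N'$, the twistor coordinate $\langle I_a \rangle$ vanishes identically on $A_{\N'}$, hence on $\overline{A_{\N'}}$; that is, $\overline{A_{\N'}} \subseteq H_a$. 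Combining the two inclusions, $\overline{A_{\N'}} \subseteq \overline{A_\N} \cap H_a = F_a$, as desired.

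I do not expect a genuine obstacle: the argument is short once organized this way, and the two points that need a word of justification --- that $\langle I_a\rangle$ restricts to the Pl\"ucker coordinate $\Delta_{I_a}$ on $A_{n,k,2}$, and that Pl\"ucker coordinates labeled by non-bases of a positroid vanish on its positroid variety --- are both already invoked elsewhere in the paper. I would also note in passing that the cover hypothesis is used only through $\N' \subsetneq \N$, so the same argument in fact yields $\overline{A_{\N'}} \subseteq F_a$ for every $\N' < \N$.
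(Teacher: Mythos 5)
Your proof is correct and takes essentially the same approach as the paper's, which simply observes in two sentences that some $I'_a$ in the Grassmann necklace of $\N'$ satisfies $I'_a >_a I_a$, i.e.\ that $I_a \notin \N'$. You have spelled out both why such an $a$ must exist (using the bijection between positroids and Grassmann necklaces) and why its existence yields $\overline{A_{\N'}} \subseteq F_a$, including the correct remark that only $\N' \subsetneq \N$ rather than the cover relation is needed.
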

\begin{proof}
One of the subsets $I'_a$ in the Grassmann necklace of $\N'$ differs from the corresponding subset $I_a$ in the Grassmann necklace of $\N$.  In fact, we have $I'_a >_a I_a$.
\end{proof}

It follows from \eqref{eq:closure} that the union of the facets of $\overline{A_\N}$ (that is, $\overline{A_{\N'}}$ for $\N' \lessdot \N$) is equal to the union $\bigcup_a F_a$.

\begin{remark} In general, the set of faces of a closed semialgebraic space could have many pathologies.  However, \cref{thm:semi} says that for $A_{n,k,2}$ the closed faces form a stratification in the following sense: the intersection of two closed faces is a union of closed faces.
\end{remark}

\begin{remark}
\cref{def:face} and \cref{thm:semi} says that the face stratification of $A_{n,k,2}$ is analogous to the following recursive definition of the faces of a convex polytope $P$.  Define the facets of $P$ as the \emph{codimension one} intersections $P \cap H$ with a hyperplane $H$, such that $P$ lies inside a closed halfspace $H_{\geq 0}$ on one side of $H$.  Recursively taking facets of facets and so on, we obtain the list of all faces of $P$. 
\end{remark}

\section{The poset $P_{n,k}$}

For $\N= (L,\{[a_1,b_1],\ldots,[a_r,b_r]\}) \in Q_{n,2}$, define the set $T :=\bigcup_i (\{a_i,a_{i}+1,\ldots,b_i-1\} \setminus L)$.  Then $\N$ is uniquely determined by the pair $(L,T)$.  We shall use this notation below.  
\subsection{Lukowski matrices}\label{sec:Luk}

Our definition of face stratification of $A_{n,k,2}$ coincides with the ``boundaries" predicted in earlier work of Lukowski \cite{Luk}.  For each $\N \in P_{n,k}$ define the following $k \times n$ \emph{Lukowski matrix}.  For each loop $a \in L$, place a row with non-zero entry only in the $a$-th position.  For each $a \in T$, we add a row with non-zero entries in the $(a,a')$ position, where $a'$ is the (cyclically) smallest index greater than $a$ which is not a loop.  Any remaining rows are filled with generic entries.

\begin{example}
Let $k=5$, $n=8$, $L=\{3\}$, and $T=\{2,4,6\}$.  Then Lukowski's matrix is
$$
\begin{bmatrix}
0 & * &0 &* &0&0&0&0 \\
0 & 0 &* &0 &0&0&0&0 \\
0 & 0 &0 &* &*&0&0&0 \\
0 & 0 &0 &0 &0&*&*&0 \\
* & * &* &* &*&*&*&* 
\end{bmatrix}.
$$
\end{example}

The following result shows that our construction is consistent with \cite{Luk}.
\begin{proposition}
A generic Lukowski matrix has matroid equal to $\N^{\uparrow k}$.
\end{proposition}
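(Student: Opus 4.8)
The plan is to compute the Grassmann necklace of the matroid $\M_L$ of a generic Lukowski matrix $M$ directly, and check it agrees with the Grassmann necklace of $\N^{\uparrow k}$. Since a positroid is determined by its Grassmann necklace (and we know $\N^{\uparrow k}$ is a positroid by the proof of \cref{thm:Pnk}), it suffices to show that the matroid of $M$ is a positroid with the correct necklace; alternatively, and perhaps more cleanly, I would verify that $M$ has the rank data that characterizes $\N^{\uparrow k}$: namely that the rows indexed by loops $a\in L$ force $a$ to be a coloop, and that for each cyclic interval $[a_i,b_i]$ with $A_i = [a_i,b_i]\setminus L$, we have $\rank_{\M_L}([n]\setminus A_i) = k - |A_i| + 1$, with $\M_L$ as large as possible subject to these. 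The genericity of the ``remaining rows'' and of the starred entries is what lets us conclude the matroid is \emph{as large as possible} given the prescribed support pattern.

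First I would handle the structural rows. The $|L|$ rows supported on single loop-positions are clearly independent and make each $a\in L$ a coloop: deleting column $a$ drops the rank by exactly one, since that row becomes zero and the remaining rows have generic/prescribed support avoiding nothing essential. Next, for each $a\in T$ (the set of interval-left-endpoints, in the notation of the example $T$ plays the role of the $a_i$), the associated row has support $\{a,a'\}$ where $a'$ is the cyclically next non-loop; iterating $a\mapsto a'$ within a maximal run of consecutive non-loop ``parallel'' positions shows these rows, taken together with the constraint, force all non-loop elements of a cyclic interval $[a_i,b_i]$ to be parallel in $\M_L$ — removing all of $A_i$ from the ground set kills $|A_i|-1$ of these rank-one-interval rows plus possibly drops one more, giving exactly $\rank([n]\setminus A_i) = k-|A_i|+1$. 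I would verify the two displayed rank identities of \cref{sec:Luk} hold for $\M_L$ by this explicit row-space bookkeeping.

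For the maximality/genericity step: given that $M$ has exactly the prescribed zero pattern (loop-rows vanish outside $L$; interval-rows vanish outside their two-element support; free rows are fully generic), a Schubert-cell / generic-matrix argument shows that $\Delta_I(M)\neq 0$ for \emph{every} $I\in\binom{[n]}{k}$ that is not \emph{forced} to vanish by the support pattern — and the forced-to-vanish sets are exactly the non-bases of $\N^{\uparrow k}$ by the rank computation above. Concretely, I would argue that for a generic choice of entries the set of nonzero Plücker coordinates of $M$ is the unique maximal one compatible with the sparsity, and that this maximal collection is precisely $\N^{\uparrow k}$ because $\N^{\uparrow k}$ is by definition the largest rank-$k$ positroid with the stated rank constraints (and $\M_L$ does satisfy those constraints, hence $\M_L \subseteq \N^{\uparrow k}$, while genericity gives $\supseteq$).

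The main obstacle I anticipate is the last step — proving that generic entries realize the \emph{full} maximal matroid rather than some proper subpositroid, and in particular that no ``hidden'' algebraic relation among the prescribed-support minors forces an extra Plücker coordinate to vanish. The loop rows and the chained two-element interval rows interact, and one must check that expanding a would-be basis determinant along the structural rows leaves a genuinely generic minor of the free-rows block (of the right size) rather than something identically zero. I would address this by a careful Laplace expansion: fix $I$ not forced to vanish, expand $\Delta_I(M)$ first along the $|L|$ loop-rows (which pins down the loop columns inside $I$), then along the interval rows (each contributing a $\pm$ times a free variable, with the chaining ensuring the choices are consistent and nonzero for generic values), reducing to a minor of the all-generic free-row block, which is nonzero for generic parameters. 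Pushing this expansion through cleanly — tracking exactly which columns get used by the structural rows and confirming the residual free-block minor has full size — is the delicate part of the argument.
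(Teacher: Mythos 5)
Your proposal is correct and follows essentially the same route as the paper: establish that the loop rows make each $a\in L$ a coloop, observe that the two-element interval rows force $\rank_{\M_L}([n]\setminus A_i)\le k-|A_i|+1$, and then invoke genericity to conclude the matroid is maximal subject to these constraints, i.e.\ equal to $\N^{\uparrow k}$. The paper is terser (it contracts the coloops first to reduce to $L=\emptyset$ and then asserts the maximality step is ``easy to see''); your explicit Laplace-expansion strategy for the maximality step is a sound way to discharge the worry you raise, and an equivalent shortcut is to note that a generic matrix with prescribed zero pattern realizes the transversal matroid of the associated bipartite support graph, which here is visibly $\N^{\uparrow k}$.
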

\begin{proof}
Let $X$ be such a matrix.  For generic entries, it is clear that $\rank([n]- a) = k-1$ for $a \in L$.  Thus each $a \in L$ is a coloop in the matrix.  Contracting all such coloops (that is, removing the $a$-th column and corresponding row), we may assume that $L = \emptyset$.

If $\rank([a,b]) = 1$ is one of the defining rank conditions of $\N$, then we see that the columns of $X$ in positions $[n] \setminus [a,b]$ are all concentrated in $k+1-|[a,b]|$ rows.  Thus the matroid of $X$ satisfies the rank condition $\rank([n]-[a,b]) \leq k+1-|[a,b]|$.  It is easy to see that for generic entries equality holds, and furthermore, the matroid of $X$ is maximal given these rank conditions, and thus is equal to $\N^{\uparrow k}$.
\end{proof}

\subsection{Rank-generating function}
The following result confirms the conjecture of \cite[Section 2.8]{Luk}.
\begin{theorem}\label{thm:corank}
The corank generating function of $P_{n,k}$ is given by
$$
\sum_{\N \in P_{n,k}} t^{c(\N)} = \sum_{c=0}^k \binom{n}{c} t^c (1+t)^c.
$$
\end{theorem}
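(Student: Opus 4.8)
The plan is to enumerate the elements $\N \in P_{n,k}$ according to the pair $(L,T)$ established in the paragraph before \cref{thm:corank}, tracking the corank statistic $c(\N)$. By \cref{thm:Pnk}(2)--(3), $P_{n,k}$ consists exactly of those $\N = (L,\{[a_1,b_1],\dots,[a_r,b_r]\}) \in Q_{n,2}$ with $e(\N) = r+k-|S\cup L| \geq 0$, and $c(\N) = 2k - d(\N) = 2k - (2k + r - |S\setminus L| - 2|L|) = |S\setminus L| + 2|L| - r$. Writing things in terms of $(L,T)$: since each cyclic interval $[a_i,b_i]$ (taken minimal) contributes $|[a_i,b_i]\setminus L|$ elements to $S\setminus L$ and $|[a_i,b_i]\setminus L| - 1$ elements to $T$ (the interval drops its $\leq$-maximal non-loop element), we get $|S\setminus L| = |T| + r$, hence $c(\N) = |T| + 2|L|$. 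So the corank only depends on $|T|$ and $|L|$, and I would aim to prove $c(\N) = |L| + |T\cup L|$ — equivalently, $T$ and $L$ are disjoint (immediate: $T\cap L=\emptyset$ by construction) so $|T\cup L| = |T|+|L|$, giving $c(\N) = |L| + (|L|+|T|)$, consistent with the above.

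Next I would set $c = |L\cup T|$ and count, for fixed $c$, the number of valid $\N$ with prescribed $c(\N)$ contribution. The idea is: first choose the $c$-element set $U = L \sqcup T \subseteq [n]$ in $\binom{n}{c}$ ways, then for each element of $U$ decide whether it lies in $L$ (contributing $t^2$ to the corank weight, via the $2|L|$ term) or in $T$ (contributing $t^1$). A subtlety: not every choice of $(L,T)$ with $L\sqcup T = U$ corresponds to an element of $P_{n,k}$ — one must reconstruct the cyclic intervals from $T$ and check the constraint $e(\N)\geq 0$, i.e.\ $|L\cup S| \leq r + k$. Here $S \setminus L$ is recovered as follows: the maximal runs of consecutive (cyclically) elements of $T$, extended by one more step to absorb the next non-loop, determine the $[a_i,b_i]$; one checks $|S\setminus L| = |T| + r$ as above and $|S\cup L| = |L| + |T| + r$, so $e(\N) = r + k - |L| - |T| - r = k - |L| - |T| = k - c \geq 0$. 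Hence the constraint is simply $c \leq k$, and for each $c \leq k$ every choice of $U$ of size $c$ and every assignment of its elements to $L$ or $T$ is valid. That gives contribution $\binom{n}{c}(t + t^2)^c = \binom{n}{c} t^c (1+t)^c$, and summing over $c = 0,\dots,k$ yields the claimed formula.

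The step I expect to be the main obstacle is verifying that the map $\N \mapsto (L, T)$, restricted to $P_{n,k}$, really is a bijection onto $\{(L,T) : L,T \subseteq [n], L\cap T = \emptyset, |L|+|T| \leq k\}$, and in particular that the minimality conventions on the intervals $[a_i,b_i]$ make the recovery of $\N$ from $(L,T)$ unambiguous. One has to be careful about edge cases: singleton cyclic intervals being suppressed, intervals whose endpoints are loops being shrunk, and the cyclic (rather than linear) nature of the runs of $T$ — e.g.\ when $T$ wraps around $n$ and $1$, or when $T = [n]$ minus a few loops. I would handle this by checking directly that (a) starting from any admissible $(L,T)$, forming the intervals as the ``$T$-runs extended by one non-loop'' produces a legitimate element $\N \in Q_{n,2}$ satisfying the minimality conventions and $e(\N) = k - |L| - |T|$, and (b) the composite $\N \mapsto (L,T) \mapsto \N$ is the identity, using that $T = \bigcup_i(\{a_i,\dots,b_i-1\}\setminus L)$ determines each interval's non-loop content and hence (with the minimality convention fixing the endpoints to be non-loops) the interval itself. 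Once the bijection and the corank formula $c(\N) = 2|L| + |T|$ are in hand, the generating-function identity is a one-line binomial computation.
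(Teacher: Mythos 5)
Your proposal is correct and follows essentially the same route as the paper: both reduce to counting pairs $(L,T)$ of disjoint subsets of $[n]$ with $|L|+|T|\leq k$, weighted by $t^{c(\N)}=t^{2|L|+|T|}$, using the observations $|S\setminus L|=|T|+r$ and $e(\N)=k-|L|-|T|$. The only cosmetic difference is that you directly choose $U=L\sqcup T$ of size $c$ and then assign each element weight $t^2$ or $t$, arriving at $\binom{n}{c}(t+t^2)^c$ in one step, whereas the paper first sums over $b=|L|$ and $a=|T|$ separately and then performs the binomial reindexing algebraically; the content is identical.
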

\begin{proof}
First, assume that $\N = (\emptyset, T)$.  Then $e(\N) = r+k - |S|$, and since $|T| = |S|-r$, the condition that $e(\N) \geq 0$ is equivalent to $|T| \leq k$.  Any subset $T \subset [n]$ satisfying $0 \leq |T| \leq k$ gives an element of $P_{n,k}$.  This gives the corank generating function $\sum_{T} t^{|T|} = \sum_{a=0}^k \binom{n}{a} t^{a}$.  For general $L$, we may first fix $L$, and then choose the subset $T$ in $[n] \setminus L$.   Summing over $b = |L|$, we get
\begin{equation*}
\sum_{b=0}^k \binom{n}{b} t^{2b} \sum_{a=0}^{k-b} \binom{n-b}{a} t^{a} = \sum_{c=0}^k \binom{n}{c} t^c \sum_{b=0}^c \binom{c}{b} t^b =  \sum_{c=0}^k \binom{n}{c} t^c (1+t)^c. \qedhere
\end{equation*}
\end{proof}

\begin{remark}\label{rem:RST}
Ranestad, Sinn, and Telen \cite{RST} study many aspects of the $k=2, m=2$ amplituhedron, including the face stratification.  Our poset $P_{n,2}$ should be compared to the strata labeled ``b" in  \cite[Table 1]{RST}.  In particular, taking $k = 2$ in \cref{thm:corank}, we get 
$$
1 + n t(1+ t) + \binom{n}{2} t^2 (1+t)^2 = 1+ n t + (n+\binom{n}{2}) t^2 + n(n-1) t^3 + \binom{n}{2} t^4,
$$
which agrees with the numerology in \cite[Table 1]{RST}.  The ``r" strata in \cite[Table 1]{RST} correspond to elements in $Q_{n,2}$ not belonging to $P_{n,2}$.
\end{remark}

\subsection{Eulerian-ness}

Let $\hP_{n,k}$ be the poset obtained from $P_{n,k}$ by adjoining a minimum element $\h0$.  Then $\hP_{n,k}$ is a graded poset with a maximum and a minimum.  A graded poset is \emph{Eulerian} if intervals of rank greater than one have an equal number of even rank and odd rank elements.  The following result confirms a prediction of Lukowski \cite{Luk}.

\def\J{{\mathcal{J}}}
\begin{theorem}\label{thm:Eulerian}
The poset $\hP_{n,k}$ is Eulerian.
\end{theorem}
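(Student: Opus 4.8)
The plan is to exploit that $P_{n,k}$ sits inside $Q_{n,2}$ as an upper order ideal, together with the regular-CW-ball structure of $\Gr(2,n)_{\geq 0}$, and thereby reduce Eulerian-ness to a single combinatorial identity.

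First I would record the formal reductions. By \cref{thm:Pnk}, $P_{n,k}$ is an upper order ideal of $Q_{n,2}$, hence convex, so for $\N\neq\h0$ every interval $[\N,\N']$ of $\hP_{n,k}$ is literally an interval of $Q_{n,2}$. Now $Q_{n,2}$ is the face poset of $\Gr(2,n)_{\geq 0}$, which by \cite{GKL1,GKL3} is a regular CW ball whose unique maximal cell is the top cell (matroid $U_{2,n}$); equivalently $Q_{n,2}$ is the face poset of the regular CW sphere $\partial \Gr(2,n)_{\geq 0}$ with $U_{2,n}$ adjoined on top, so $\hat Q_{n,2}$ (adjoining $\h0$) is the augmented and co-augmented face poset of a regular CW sphere and is therefore Eulerian. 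Consequently every interval of $\hP_{n,k}$ not containing $\h0$ is Eulerian, and $\hP_{n,k}$ is Eulerian if and only if $\mu_{\hP_{n,k}}(\h0,\N)=(-1)^{d(\N)+1}$ for all $\N\in P_{n,k}$; expanding the M\"obius recursion and inducting on rank, this is equivalent to
\[
\sum_{\N'\in P_{n,k},\ \N'\leq\N}(-1)^{d(\N')}=1\qquad\text{for all }\N\in P_{n,k},
\]
an identity I will call $(\star)$.

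Next I would rewrite $(\star)$ geometrically. Extending formula \eqref{eq:cd} to all of $Q_{n,2}$, the function $d$ differs from the rank function $\dim\Pi_{\N',>0}$ of $Q_{n,2}$ by the fixed even constant $2(n-k-2)$, so $(-1)^{d(\N')}=(-1)^{\dim\Pi_{\N',>0}}$ on all of $Q_{n,2}$; since $\hat Q_{n,2}$ is Eulerian this yields $\sum_{\N'\leq\N}(-1)^{d(\N')}=1$ for every $\N\in Q_{n,2}$, and therefore $(\star)$ is equivalent to $\sum_{\N'\leq\N,\ \N'\notin P_{n,k}}(-1)^{d(\N')}=0$. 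By \cref{thm:Pnk} the positroids $\N'<\N$ not in $P_{n,k}$ are exactly those with $e(\N')<0$, and since $\{e<0\}$ is a lower order ideal of $Q_{n,2}$ they form a closed subcomplex $Z_\N$ of $\partial \overline{\Pi_{\N,>0}}$; so $(\star)$ says precisely that $\chi(Z_\N)=0$.

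The remaining — and main — step is to prove $\chi(Z_\N)=0$, which I would do with a fixed-point-free, $(-1)^{d}$-reversing involution on $\{\N'\leq\N:e(\N')<0\}$. The elementary move is that turning a non-loop of a rank $2$ positroid into a loop decreases $d$ by exactly one, leaves $e$ unchanged, and preserves the relation $\leq\N$ — a short computation with \eqref{eq:cd}, compare the proof of \cref{thm:Pnk}; so a positroid in this set that carries a non-trivial parallel class gets toggled against the one obtained by looping (or, reversing, un-looping) a canonically chosen element of a canonically chosen class, staying inside $\{e<0\}$ and below $\N$. The positroids in the set with no non-trivial class are exactly those of the form $(L,\{\})$ with $|L|>k$ that lie below $\N$; these I would pair by looping/un-looping a canonically chosen free element — this changes $e$, but $|L|>k$ makes a short counting check enough to stay in $\{e<0\}$ and below $\N$. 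The hard part will be assembling these local toggles into a single globally coherent involution: the distinguished element has to be chosen so the operation is self-inverse, and at each step one must verify the result is a genuine rank $2$ positroid, still $\leq\N$, and still has $e<0$ — the un-looping direction and the interface between the two pairing regimes being where this is most delicate. (If the involution becomes unwieldy, an alternative is to compute $\mu_{\hP_{n,k}}(\h0,\N)$ by a crosscut argument over the atoms $(L',\{\})\leq\N$ of $\hP_{n,k}$, the join condition factoring over the cyclic intervals $[a_i,b_i]$ and the loop/free data into a product of elementary alternating sums evaluating to $(-1)^{d(\N)+1}$.)
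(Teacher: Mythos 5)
Your opening reductions are correct and closely parallel the first half of the paper's proof: intervals $[x,y]$ with $x\neq\h0$ are intervals of $Q_{n,2}$, and $\hat Q_{n,2}$ is Eulerian (the paper also invokes this, citing \cite{KLS,Wil}). Your observation that $d(\N')=\dim\Pi_{\N',>0}-2(n-k-2)$ on all of $Q_{n,2}$ is also correct, and it legitimately converts $(\star)$ into the statement $\sum_{\N'\leq\N,\ e(\N')<0}(-1)^{d(\N')}=0$. At that point you diverge from the paper, which instead builds its sign-reversing involution directly on $\J(\N)\subset P_{n,k}$ — using an explicit description of which $\N'=(L',T')$ lie below $\N$, the ``flip'' of an element between $L'$ and $T'$, and a unique even-rank rigid fixed point.

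The problem is that your involution on the complement $\{\N'\leq\N:\,e(\N')<0\}$ cannot work as you describe it. Write $d(\N')=k+e(\N')-|L'|$ (this identity follows from \eqref{eq:cd} together with $|S'\cup L'|-|S'\setminus L'|=|L'|$). Your first operation, looping an element of $S'\setminus L'$, does indeed fix $e$ and decrement $|L'|$, hence flips $(-1)^d$ — fine. But for the positroids $(L',\{\})$ with $|L'|>k$, any ``looping/un-looping a free element'' sends $(L',\{\})\mapsto(L'\cup\{a\},\{\})$ or $(L'\setminus\{a\},\{\})$: here $e$ changes by $\mp1$ and $|L'|$ by $\pm1$, so $d$ changes by $\mp2$ and the sign $(-1)^d$ is \emph{preserved}, not reversed. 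In fact every $(L',\{\})$ has $d=2k-2|L'|$ even, so these elements can never be paired among themselves; each must be matched against a positroid with a nontrivial parallel class, which is exactly the ``interface between the two pairing regimes'' you flag as delicate but do not resolve. As stated, the pairing for the loops-only positroids is parity-preserving and the alternating sum does not cancel. This is a genuine gap, not merely an unfinished detail, and it is precisely the difficulty the paper avoids by working inside $\J(\N)$ rather than the residual complement. (The crosscut alternative you mention is plausible but is also only a sketch.)
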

\begin{proof}
We need to check that intervals of rank greater than one have an equal number of even rank and odd rank elements.
For intervals $[x,y] \subset \hP_{n,k}$ where $x \neq \h0$, this is known since such intervals are intervals in $Q_{n,k}$, and, $Q_{n,k}$ is Eulerian; see \cite{KLS,Wil}.  We consider intervals $[\h0,\N]$ in $\hP_{n,k}$ where $\N=(L, \{[a_1,b_1],\ldots,[a_r,b_r]\}) = (L,T)$.  If $|L|>0$, then removing the elements that are loops, we find that $[\h0,\N]$ is isomorphic to an interval of the same form in $\hP_{n-|L|,k-|L|}$.  Henceforth we assume that $\N = (\emptyset,\{[a_1,b_1],\ldots,[a_r,b_r]\})= (\emptyset, T)$ has no loops. 

Let $\J(\N)$ be the principal lower order ideal in $P_{n,k}$ generated by $\N$.  Let $\N' = (L',T') \in P_{n,k}$.
We have $\N'\in \J(\N)$ if and only if for each $i$, the complement $[a_i,b_i] \setminus (L'\cup T')]$ is either a single element $c$, and we have $A_i:=[c+1,b_i] \subset L'$, or it is empty and we set $A_i:=\emptyset$.
We say that an element $d \in (L' \cup T')$ is \emph{flippable} if moving $d$ from $L'$ to $T'$ or vice versa produces another $\N'' \in \J(\N)$.  Every element in $T'$ is flippable.  An element $d \in L'$ is not flippable exactly when $d$ belongs to one of the sets $A_i$.  Call $\N'=(L',T')$ \emph{rigid} if it contains no flippable elements.  Note that the set of flippable elements does not change after performing a flip.  We may thus define an involution on the set of non-rigid positroids in $\J(\N)$ by flipping the smallest (in the usual order on $[n]$) flippable element.

Since flipping changes the parity of the rank of an element, it remains to consider the set of rigid elements.  This consists of a single element $(L',T')$ where $T'$ is empty and $L' = \bigcup_i [a_i+1,b_i]$, which has even rank in $P_{n,k}$.  It follows that the interval $[\h0,\N]$ in $\hP_{n,k}$ has an equal number of odd rank and even rank elements.
\end{proof}

\def\hQ{{\hat Q}}
\begin{remark}
Williams \cite{Wil} proves that $\hQ_{n,k}$ is thin and shellable and thus the face poset of a regular CW-complex homeomorphic to a closed ball.  Since $P_{n,k}$ is thin, it is easy to see that $\hP_{n,k}$ is thin.  We expect $\hP_{n,k}$ to be EL-shellable.
\end{remark}

\section{Dimension estimates}\label{sec:dim}
In this section, we suppose that $Z$ is generic.  Note that a generic positive $Z$ is a generic $Z$ in the sense of \cite{LamAS}.  In \cite{KLS}, the cohomology class $[\Pi_\M] \in H^*(\Gr(k,n))$ of a positroid variety is described in terms of \emph{affine Stanley symmetric functions} \cite{LamASold}.  We give an explicit description of this function in the special case that $\M = \N^{\uparrow k}$.  We use notation from \cite{KLS,LamAS,LamCDM}.  

\begin{proposition}\label{prop:LamAS}
Let $\M= \N^{\uparrow k}$, where $\N=(L,\{[a_1,b_1],\ldots,[a_r,b_r]\})$, and set $$d_i:=|[a_i,b_i]\setminus L|-1, \qquad \text{for } i=1,2,\ldots,r.$$  Then the cohomology class $[\Pi_\M]$ is the image in $H^*(\Gr(k,n))$ of the symmetric function
$$
h_{n-k}^{|L|} \prod_{i=1}^r s_{(n-k-1)^{d_i}}
$$ 
where $h_r$ and $s_\lambda$ denote homogeneous and Schur symmetric functions, and $(n-k-1)^{d_i}$ denotes a $d_i \times (n-k-1)$ rectangle.
\end{proposition}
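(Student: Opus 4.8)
The plan is to identify the cohomology class $[\Pi_\M]$ directly via the known combinatorial recipe from \cite{KLS}. Recall that a positroid variety $\Pi_\M$ has cohomology class equal to the image of an affine Stanley symmetric function $\tilde{F}_f$, where $f$ is the affine permutation (equivalently, the bounded affine permutation or the decorated Grassmann necklace data) attached to $\M$. The affine Stanley symmetric function of a $321$-avoiding (fully commutative) affine permutation is, by \cite{LamASold} (cf. \cite{LamAS}), a skew Schur function $s_{\lambda/\mu}$ read off from the ribbon-shaped support of the permutation; and a product over disjoint cyclic intervals corresponds to a product of such Schur functions. So the first step is to compute, from $\N = (L,\{[a_1,b_1],\dots,[a_r,b_r]\})$, the bounded affine permutation $f_\M$ of $\M = \N^{\uparrow k}$ using the explicit rank data recorded just before the statement, namely that elements of $L$ are coloops and $\rank_\M([n]\setminus A_i) = k - |A_i| + 1$ where $A_i = [a_i,b_i]\setminus L$.

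Next I would translate this into the shape. Each coloop $a \in L$ contributes a factor of $h_{n-k}$: a single coloop is the positroid whose class is $h_{n-k}$ (the class of the Schubert divisor complement / a hyperplane section, matching the $|L|$-fold product $h_{n-k}^{|L|}$). For a cyclic interval $[a_i,b_i]$ with $|A_i| = d_i + 1$ non-loop elements all parallel, the local rank drop is $d_i$, and the corresponding affine permutation is fully commutative with a ribbon (in fact rectangular) support of size $d_i \times (n-k-1)$; its affine Stanley symmetric function is the Schur function $s_{(n-k-1)^{d_i}}$. Because the intervals $[a_i,b_i]$ are disjoint and the coloops are separate, the bounded affine permutation factors as a product of commuting cycles, and affine Stanley symmetric functions are multiplicative over such "direct sum" decompositions (Lam, \cite{LamASold}); this yields the claimed product $h_{n-k}^{|L|} \prod_i s_{(n-k-1)^{d_i}}$.

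The cleanest route is probably to avoid heavy affine-permutation bookkeeping and instead argue by a pushforward/restriction identity: the class $[\Pi_\M]$ can be computed as a product of classes of the "building block" positroids — one coloop, and one parallel cyclic interval — via the fact that $\M = \N^{\uparrow k}$ is, as a positroid, the "direct sum along the cycle" of these blocks, and the class of a connected sum of positroids supported on disjoint cyclic intervals is the product of the classes (this is exactly the multiplicativity of affine Stanley functions, but can also be seen geometrically since $\oPi_\M$ has a corresponding product structure on an affine chart, cf. \cite{KLS}). Then I only need the two base cases: $|L|=1, r=0$ gives $h_{n-k}$, and $L=\emptyset$, $r=1$, $|A_1| = d+1$ gives $s_{(n-k-1)^d}$ — the latter is the positroid whose cell is the set of matrices with a single rank-one cyclic window of length $d+1$, and a direct check (e.g. via the Grassmann necklace and the Bruhat-order length, or by recognizing it as a Schubert-variety class inside a sub-Grassmannian pushed forward) identifies its class with the rectangular Schur function $s_{(n-k-1)^d}$.

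The main obstacle I anticipate is pinning down the two base-case identities precisely — especially getting the rectangle $(n-k-1)^{d_i}$ oriented correctly (width $n-k-1$ versus height $d_i$) and confirming the coloop block is $h_{n-k}$ rather than, say, $h_k$ or an elementary symmetric function — and verifying that the multiplicativity genuinely applies to $\N^{\uparrow k}$, i.e. that $\N^{\uparrow k}$ really is the cyclic direct sum of these blocks as a positroid (not merely as an abstract matroid). Both points should be checkable against small cases ($n=4,5$) and against the dimension count: the total degree $|L|(n-k) + \sum_i d_i(n-k-1)$ should equal $\codim_{\Gr(k,n)} \Pi_\M = c(\N)\cdot\text{(something)}$ — more precisely it should match $\dim\Gr(k,n) - \dim\Pi_\M$, which via \cref{eq:cd} and \cref{thm:Pnk}(3) gives a clean consistency check that fixes all the constants.
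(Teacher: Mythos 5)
Your proposed answer and both base-case classes ($h_{n-k}$ per coloop, $s_{(n-k-1)^{d_i}}$ per parallel window) are correct, but your preferred route has a real gap. You want to argue multiplicativity by claiming that $\M = \N^{\uparrow k}$ is a ``cyclic direct sum'' of a coloop block and the parallel-interval blocks, so that $\oPi_\M$ acquires a product structure. This is not so: the parallel intervals are \emph{not} direct summands of $\M$. The defining rank condition is $\rank_\M([n]\setminus A_i) = k - |A_i| + 1$, which is a corank condition on the \emph{complement} of $A_i$, not a statement that $\M$ restricts/decomposes along $A_i$. (Equivalently, in $\M^*$ the set $A_i$ is a rank-one flat, not a connected component.) So neither the matroid nor the positroid variety literally factors, and the ``product on an affine chart'' step does not go through as stated. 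Your alternative route via multiplicativity of affine Stanley symmetric functions could in principle be salvaged, but you would then need to verify explicitly that the bounded affine permutation of $\M$ factors into commuting pieces with disjoint windows, which you do not do and which is precisely the ``heavy bookkeeping'' you were trying to avoid.

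The paper sidesteps all of this by computing $[\Pi_{\M^*}]$ in $\Gr(n-k,n)$ instead. There, each $\ell \in L$ is a \emph{loop} of $\M^*$, which cuts out a sub-Grassmannian with class $e_{n-k}$; and each condition $\rank_{\M^*}([a_i,b_i])\le 1$ cuts out a single Schubert variety with class $s_{(d_i)^{n-k-1}}$. Because the cyclic intervals are disjoint, these Schubert varieties are \emph{transverse}, so the class of the intersection is the product $e_{n-k}^{|L|}\prod_i s_{(d_i)^{n-k-1}}$; transposing ($s_\lambda\mapsto s_{\lambda'}$) gives $[\Pi_\M]$. Transversality of Schubert varieties with respect to flags in general position is a robust, standard fact, whereas a product decomposition of a positroid variety is a much stronger and here false assertion. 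Finally, a small correction on your sanity check: the degree of $h_{n-k}^{|L|}\prod_i s_{(n-k-1)^{d_i}}$ is $|L|(n-k) + (n-k-1)\sum_i d_i = \codim_{\Gr(k,n)}\Pi_\M$, which is \emph{not} $c(\N) = 2|L| + \sum_i d_i$; the quantity $c(\N)$ is the codimension of the twistor image $\varphi_2(\Pi_\M)$ in $\Gr(2,\sp(Z))$ (\cref{prop:dimest}), not of $\Pi_\M$ in $\Gr(k,n)$, so comparing against $c(\N)$ would give a false alarm.
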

\begin{proof}
This result is a variant of \cite[Proposition 10.15]{LamCDM}.  We calculate the class of $[\Pi_{\M^*}] \in H^*(\Gr(n-k,n))$.  The class of $[\Pi_\M]$ is obtained by taking the transpose $s_\lambda \mapsto s_{\lambda'}$ of the Schur function expansion of $[\Pi_{\M^*}]$.  Setting $\ell \in L$ to be a loop in $\M^*$ cuts out a subGrassmannian of the Grassmannian $\Gr(n-k,n)$ with cohomology class the elementary symmetric function $e_{n-k}$.  Replacing $n$ with $n - |L|$, we may assume that $\M^*$ has no loops.  The rank condition $\rank(a,a+1,\ldots,b) \leq 1$ cuts out a Schubert variety with cohomology class $s_{(b-a-1)^{n-k-1}}$.  As we vary over the cyclic intervals $[a_i,b_i]$, these Schubert varieties are transverse (because the intervals are disjoint).  We deduce that the $[\Pi_{\M^*}]$ is represented by the symmetric function $e_{n-k}^{|L|} \prod_{i=1}^r s_{(d_i)^{n-k-1}}$.  Taking the transpose gives the stated result for $[\Pi_\M]$.
\end{proof}
It is also possible to prove \cref{prop:LamAS} using the combinatorics of affine Stanley symmetric functions.

In \cite{LamAS}, the cohomology class of the twistor image $\varphi_m(\Pi_\M) \subset \Gr(m,n)$ (or equivalently, the image $Z(\Pi_\M) \subset \Gr(k,k+m)$) was computed, again using affine Stanley symmetric functions.  The result there was only stated in the case that $\Pi_\M$ and $\varphi_m(\Pi_\M)$ have the same dimension.   We state a slight modification of \cite[Proposition 3.5]{LamAS} which holds when $\Pi_\M$ and $\varphi_m(\Pi_\M)$ have different dimensions.

  For $\lambda=(\lambda_1,\ldots,\lambda_k)$ a partition, let $r_m(\lambda) = \sum_{i=1}^k \max(\lambda_i - (n-k-m),0)$.
\begin{proposition}\label{prop:dimX}
Let $f = \sum_{\lambda \subseteq (n-k)^k} c_\lambda s_\lambda$ represent the class of an irreducible subvariety $X \subset \Gr(k,n)$ and set $C(X) = \{\lambda \mid c_\lambda > 0\}$.  Then for a generic $Z$, the image $Z(X) \subset \Gr(k,k+m)$ has codimension 
$$
\codim(Z(X)) = \min\{r_m(\lambda) \mid \lambda \in C(X)\}.
$$
\end{proposition}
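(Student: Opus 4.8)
The plan is to reduce the statement to the analysis of the Schubert calculus of the linear projection $Z: \Gr(k,n) \to \Gr(k,k+m)$, as carried out in \cite{LamAS}. First I would recall the key geometric fact underlying the main result of \cite{LamAS}: the map $Z$ is the composition of the rational map deleting the last $n-k-m$ coordinates, and for a generic choice of $Z$ this deletion is transverse to $X$ in the appropriate sense. Concretely, projecting from a generic point of $\Gr(m,n)$ (or iterating) corresponds, on the level of cohomology classes, to a map of symmetric functions that sends $s_\lambda$ to $s_{\lambda}$ when $\lambda$ fits inside the rectangle $(n-k-m)^k$ and kills $s_\lambda$ otherwise; the ``amount'' by which $\lambda$ fails to fit is measured precisely by $r_m(\lambda) = \sum_i \max(\lambda_i - (n-k-m), 0)$, which is the number of boxes of $\lambda$ sticking out past column $n-k-m$. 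The codimension of $Z(X)$ in $\Gr(k,k+m)$ equals $km - \dim Z(X)$, and $\dim Z(X)$ is governed by the largest-dimensional ``surviving'' Schubert class in the pushforward of $f$.

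The key steps, in order, are: (i) state the refined pushforward formula — for generic $Z$, the class $Z_*[X] \in H^*(\Gr(k,k+m))$ (interpreted via the correspondence between $\Gr(k,n)$ and $\Gr(k,k+m)$ Schubert classes) picks out exactly those $s_\lambda$ with $\lambda \in C(X)$ that have a nonzero image, and the generic transversality ensures no cancellation occurs among the positive coefficients $c_\lambda$; (ii) observe that in $\Gr(k,k+m)$ a Schubert class $s_\mu$ with $\mu \subseteq (m)^k$ has codimension $|\mu|$, and that the projection of $s_\lambda$ from $\Gr(k,n)$ lands in codimension $r_m(\lambda)$ — this is the content of the (slightly modified) main theorem of \cite{LamAS}, the modification being that we do not assume $\dim X = \dim Z(X)$; (iii) conclude that the lowest-codimension (equivalently, highest-dimensional) component of $Z(X)$ corresponds to the $\lambda \in C(X)$ minimizing $r_m(\lambda)$, which gives the stated formula. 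The genericity of $Z$ is used exactly to guarantee that (a) the projection restricted to each Schubert stratum is transverse so that the codimension count is exact, and (b) the terms of lowest codimension in the pushforward do not cancel — since all $c_\lambda > 0$, positivity handles (b) automatically once $Z$ is generic enough that the pushforward respects the monomial structure.

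The main obstacle I anticipate is step (ii): the original theorem in \cite{LamAS} is stated under the hypothesis that $X$ and $Z(X)$ have the same dimension, and one must carefully check that the argument there — which presumably works stratum-by-stratum or via a degeneration of $Z$ to a coordinate projection — still produces the correct codimension when the map $X \to Z(X)$ has positive-dimensional fibers. In that case one cannot simply compare fundamental classes of the same dimension; instead one must track which part of $[X]$ maps dominantly onto a top-dimensional component of the image and which part collapses. The cleanest way to handle this is probably to factor $Z$ through a sequence of single-coordinate projections and induct, at each stage using that a generic one-step projection either preserves dimension (and acts on classes by the Pieri-type truncation $s_\lambda \mapsto s_\lambda$ if $\lambda_1 \le n-k-m'$, else by a shift) or drops dimension by one in a controlled way; the bookkeeping of how $r_m$ changes under each elementary step is routine but must be done to make the induction close. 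Once this refined pushforward is in hand, steps (i) and (iii) are immediate from positivity of the $c_\lambda$ and the dimension formula $\codim Z(X) = \min_\lambda \codim(\text{image of } s_\lambda)$.
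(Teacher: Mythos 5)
Your proposal takes a genuinely different route from the paper, and it has a gap at exactly the point you flag as the ``main obstacle.'' The paper never computes a pushforward class. Instead it argues dually by \emph{intersection}: the codimension of $Z(X)$ is detected by which generic Schubert varieties $Y_\mu \subseteq \Gr(k,k+m)$ it meets, and $Z(X) \cap gY_\mu \neq \emptyset$ iff $X \cap Z^{-1}(gY_\mu) \neq \emptyset$. Since $Z^{-1}(gY_\mu)$ is again a (generic translate of a) Schubert variety in $\Gr(k,n)$ with class $s_\mu$ and $\mu \subseteq (m)^k$, this reduces to the classical criterion for when generic translates of $X_\lambda$ and $X_\mu$ meet, applied to $\lambda \in C(X)$. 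Minimizing over $\mu$ fitting in the $k \times m$ box and over $\lambda \in C(X)$ gives $\min r_m(\lambda)$. Crucially, this argument is insensitive to the dimension of the fibers of $Z|_X$: it only asks whether intersections are nonempty, never what class they represent.

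Your approach via $Z_*[X]$ runs into exactly the problem you name: when $Z|_X$ has positive-dimensional generic fiber, $Z_*[X]$ in cohomology is supported in degree $2(km - \dim X)$, which is smaller than $2\,\codim Z(X)$, and is typically zero; it simply does not encode $\codim Z(X)$. The proposed remedy---factoring $Z$ through one-step generic projections and tracking $r_m$ inductively---is not fleshed out and is not obviously closeable: at each elementary step the dimension drop depends on whether $X$ happens to contain full fibers of that step, which is a geometric condition on $X$ not immediately read off from the Schur expansion of $[X]$, and the positivity of the $c_\lambda$ does not by itself prevent the image class from vanishing in the relevant degree. Without an argument that replaces ``pushforward of a class'' with ``nonemptiness of an intersection'' (as the paper does), the degree-counting step (i) in your outline does not go through, and the induction in your final paragraph remains a sketch rather than a proof. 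The intersection-theoretic formulation is what lets the paper bypass this issue entirely.
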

\begin{proof} We sketch the proof, assuming the reader is familiar with the geometry of the map $Z: \Gr(k,n) \to \Gr(k,k+m)$ as discussed in \cite{LamAS}.  To compute the codimension of $Z(X)$, we check whether it intersects a generic translate of various Schubert varieties $Y_\mu \subset \Gr(k,k+m)$, where $[Y_\mu] = s_\mu$.  The (closure of the) inverse image $Z^{-1}(Y_\mu)$ is a Schubert variety in $\Gr(k,n)$ with the same cohomology class $s_\mu$.  Here, $\mu$ is a partition in a $k \times m$ box.  Now, generic translates of two Schubert varieties $X_\lambda$ and $X_\mu$ in $\Gr(k,n)$ intersect if and only if the 180 degree rotation of $\mu$ fits inside the complement of $\lambda$ inside the rectangle $(n-k)^k$.  Thus, $Z(X)$ has codimension $\leq r$ if and only if there is a $\lambda \in C(X)$ such that the intersection of $\lambda$ with the rightmost $m$ columns of the $(n-k) \times k$ rectangle contains $\leq r$ boxes.  This intersection has $r_m(\lambda)$ boxes, from which the result follows.
\end{proof}

\begin{proposition}\label{prop:dimest}
With the same notation as in \cref{prop:LamAS},  the codimension of $\varphi_m(\Pi_M)$ in $\Gr(2,\sp(Z))$ is equal to $c(\N)$ defined in \eqref{eq:cd}.
\end{proposition}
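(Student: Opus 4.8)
The plan is to combine \cref{prop:LamAS}, \cref{prop:dimX} and \cref{lem:twistor}, turning the assertion into a short estimate on the Schur expansion of the explicit symmetric function representing $[\Pi_\M]$, where $\M=\N^{\uparrow k}$ and $m=2$. Write $f:=h_{n-k}^{|L|}\prod_{i=1}^r s_{(n-k-1)^{d_i}}$ for the symmetric function of \cref{prop:LamAS}. Since $\psi\colon\Gr(k,k+2)\to\Gr(2,n)$ is an embedding whose image is $\Gr(2,\sp(Z))$ (\cref{lem:twistor}), the codimension of $\varphi_2(\Pi_\M)$ in $\Gr(2,\sp(Z))$ equals that of $Z(\Pi_\M)$ in $\Gr(k,k+2)$; as $\Pi_\M$ is irreducible, \cref{prop:dimX} (specialized to $m=2$) evaluates this to $\min\{r_2(\lambda)\mid\lambda\in C(\Pi_\M)\}$, where $r_2(\lambda)=\sum_i\max(\lambda_i-(n-k-2),0)$ and $C(\Pi_\M)$ is the set of $\lambda\subseteq(n-k)^k$ occurring with positive coefficient in $f$ (there is no cancellation, as $f$ is Schur-positive).

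Next I would do the bookkeeping pinning down the target value. Set $D:=\sum_{i=1}^r d_i$. The intervals $[a_i,b_i]$ are disjoint, so $|S\setminus L|=\sum_i|[a_i,b_i]\setminus L|=\sum_i(d_i+1)=r+D$; hence $d(\N)=2k-2|L|-D$ and $c(\N)=2|L|+D$. A similar count gives $|S\cup L|=r+D+|L|$, so the hypothesis $e(\N)\geq 0$ (valid since $\N\in P_{n,k}$) is precisely $|L|+D\leq k$. Thus it remains to show $\min\{r_2(\lambda)\mid\lambda\in C(\Pi_\M)\}=2|L|+D$.

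For the bound $\leq 2|L|+D$, I would exhibit $\lambda^{\ast}:=\bigl((n-k)^{|L|},(n-k-1)^{D}\bigr)$. It has $|L|+D\leq k$ parts, all at most $n-k$, so $\lambda^{\ast}\subseteq(n-k)^k$; since $n-k\geq 2$ one gets $r_2(\lambda^{\ast})=2|L|+D$; and $\lambda^{\ast}$ appears in $f$ with positive coefficient because $s_{(n-k)^{|L|}}$ occurs in $h_{n-k}^{|L|}$ with coefficient $1$, and, iterating the standard fact that $s_{(a^p)}s_{(b^q)}$ contains $s_{(a^p,b^q)}$ with coefficient $1$ whenever $a\geq b$ (unique Littlewood--Richardson filling of a rectangular skew shape), the product $s_{(n-k)^{|L|}}\prod_i s_{(n-k-1)^{d_i}}$ contains $s_{\lambda^{\ast}}$.

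The main obstacle is the reverse bound: every $\lambda$ occurring in $f$ has $r_2(\lambda)\geq 2|L|+D$. Here I would use two coarse invariants of a product of Schur functions: all such $\lambda$ have the same size $|\lambda|=|L|(n-k)+D(n-k-1)$, and $\ell(\lambda)\leq|L|+D$, since the number of rows of a Schur function in a product of Schur functions is at most the sum of the row counts of the factors and $h_{n-k}=s_{(n-k)}$ has one row. Then, using $\max(x,0)\geq x$ and $n-k-2\geq 0$,
\[
r_2(\lambda)=\sum_i\max\bigl(\lambda_i-(n-k-2),0\bigr)\ \geq\ |\lambda|-(n-k-2)\,\ell(\lambda)\ \geq\ |\lambda|-(n-k-2)(|L|+D)\ =\ 2|L|+D,
\]
the last equality being a one-line computation. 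With the previous step and \cref{prop:dimX} this yields $\codim(\varphi_2(\Pi_\M))=c(\N)$. I expect that the only points needing care are the coefficient-one statement for stacked rectangles and the inequality $\ell(\lambda)\leq|L|+D$, both classical facts about Littlewood--Richardson coefficients, while everything else is formal.
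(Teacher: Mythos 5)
Your proposal is correct and follows essentially the same route as the paper: exhibit $\lambda(\N)=\bigl((n-k)^{|L|},(n-k-1)^{D}\bigr)$ in the Schur expansion of $h_{n-k}^{|L|}\prod_i s_{(n-k-1)^{d_i}}$, compute $r_2(\lambda(\N))=2|L|+D=c(\N)$, and then use the degree and the Littlewood--Richardson bound $\ell(\mu)\leq |L|+D$ on all $\mu$ occurring in the product to show $\lambda(\N)$ minimizes $r_2$. The only difference is that you make explicit the final inequality $r_2(\mu)\geq|\mu|-(n-k-2)\ell(\mu)\geq 2|L|+D$, which the paper compresses into ``it follows,'' and you verify via $e(\N)\geq 0$ that $\lambda(\N)$ has at most $k$ parts, a check the paper leaves implicit.
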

\begin{proof}
This result follows from \cref{prop:LamAS} and \cref{prop:dimX}.  Indeed, the Schur function expansion of $h_{n-k}^{|L|} \prod_{i=1}^r s_{(n-k-1)^{d_i}}$ contains the Schur function $s_\lambda$ where $\lambda= \lambda(\N)$ contains $|L|$ parts equal to $n-k$ and $\sum_{i=1}^r d_i$ parts equal to $n-k-1$. For this partition, we have $r_2(\lambda(\N)) = 2|L| + \sum_{i=1}^r d_i = c(\N)$.  It follows from the Littlewood-Richardson rule that all Schur functions $s_\mu$ appearing in the expansion of $h_{n-k}^{|L|} \prod_{i=1}^r s_{(n-k-1)^{d_i}}$ have partitions $\mu$ with less than or equal to $|L| + \sum_{i=1}^{r} d_i$ parts.  Thus $\lambda(\N)$ minimizes $r_m(\mu)$ among all $s_\mu$ appearing in the expansion of $h_{n-k}^{|L|} \prod_{i=1}^r s_{(n-k-1)^{d_i}}$.
\end{proof}

\begin{proof}[Proof of \cref{thm:semi}(1)]
By \cref{prop:LamAS}, or its proof, or \cite[Proposition 10.15]{LamCDM}, the cohomology class $[\Pi_\N] \in H^*(\Gr(2,n))$ is given by the symmetric function
\begin{equation}\label{eq:PiN}
[\Pi_\N]= e_{2}^{|L|} \prod_{i=1}^r h_{d_i}
\end{equation}
With $Z$ generic, the intersection $\Pi_\N \cap \Gr(2,\sp(Z))$ has cohomology class obtained by taking only those Schur functions that fit inside a $2 \times k$ rectangle from \eqref{eq:PiN}.  As long as this class is non-zero, the intersection would have codimension equal to the degree of $[\Pi_\N]$, which is $2|L| + \sum_{i=1}^r d_i = c(\N)$.  This class is always nonzero when $\N \in P_{n,k}$, but there do exist $\N \in Q_{n,2} \setminus P_{n,k}$ for which the class is nonzero and the intersection $\Pi_\N \cap \Gr(2,\sp(Z))$ is non-empty; see \cref{prop:residual}.

Now let $\M = \N^{\uparrow k}$.  Then by \cref{lem:down}, we have $\varphi_m(\Pi_\M) \subset \Pi_\N \cap \Gr(2,\sp(Z))$ and by \cref{prop:dimest} it has codimension $c(\N)$ and dimension $d(\N)$.  Since $\Pi_{\M,>0}$ is Zariski-dense in $\Pi_\M$ \cite{KLS}, it follows that the dimension of $\varphi_m(\Pi_{\M,>0})$ is equal to $d(\N)$.  By \eqref{eq:closure2}, we have $\overline{A_\N} = \overline{\varphi_m(\Pi_{\M,>0})}$ and the result follows.
\end{proof}

We also describe the \emph{residual arrangement} (cf. \cite{RST}) of $A_{n,k,2}$.  These are the positroid strata $\Pi_\N$ that intersect $\Gr(2,\sp(Z))$ which are \emph{not} faces of $A_{n,k,2}$, or equivalently, do not belong to $P_{n,k}$.

\begin{proposition}\label{prop:residual}
Let $\N \in Q_{n,2} - P_{n,k}$.  With notation as in \cref{prop:LamAS}, $\Pi_\N$ belongs to the residual arrangement if and only if we have (a) $c(\N) \leq 2k$, (b) $a:= k - |L| \geq 0$, and (c) $d_i \leq a$ for $i=1,2,\ldots, r$.
\end{proposition}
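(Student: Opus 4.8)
The plan is to characterize $\Pi_\N \cap \Gr(2,\sp(Z))$ being nonempty via the cohomology class computation already set up in \cref{prop:LamAS} and \cref{prop:dimest}, and then use \cref{thm:Pnk}(2) to separate the two cases $\N \in P_{n,k}$ versus $\N \notin P_{n,k}$. Concretely, for $\N = (L,\{[a_1,b_1],\ldots,[a_r,b_r]\}) \in Q_{n,2}$, the class $[\Pi_\N] \in H^*(\Gr(2,n))$ equals $e_2^{|L|}\prod_{i=1}^r h_{d_i}$ by \eqref{eq:PiN}. For generic $Z$, the intersection $\Pi_\N \cap \Gr(2,\sp(Z))$ is nonempty if and only if some Schur function $s_\mu$ appearing in this product fits inside the $2 \times k$ rectangle $(k)^2$ (i.e., $\mu$ has at most $2$ parts, each of size at most $k$); this is the standard fact that the intersection of a generic translate of a Schubert variety $X_\mu$ with the sub-Grassmannian $\Gr(2,\sp(Z))$ is nonempty precisely when $\mu$ fits in the rectangle, combined with the fact that the full variety $\Pi_\N$ decomposes (up to rational equivalence) as a sum of Schubert classes with nonnegative coefficients and is nonempty iff at least one survives. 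I would first make this "nonempty $\iff$ some $\mu \subseteq (k)^2$ appears" criterion precise, handling the possibly-zero-dimensional edge case separately as is done elsewhere in the paper.

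**Next I would** translate the condition "some $\mu$ with $\mu \subseteq (k)^2$ appears in $e_2^{|L|}\prod_{i=1}^r h_{d_i}$" into the combinatorial conditions (a)--(c). Expanding the product: $e_2^{|L|}$ contributes, via the Littlewood--Richardson rule, partitions with at most $2|L|$ boxes built from columns of height $\le 2$; each $h_{d_i}$ contributes a horizontal strip of $d_i$ boxes. The minimal-width partition obtainable (minimizing the first part, equivalently the number of columns) is obtained by stacking: the $h_{d_i}$ strips can be piled into at most $2$ rows only if each $d_i$ is small enough relative to the available width. The key numerical observations are: (i) the total number of boxes is $2|L| + \sum d_i = c(\N)$, which must be $\le 2k$ to fit in $(k)^2$ — this is condition (a); (ii) after we use up $2|L|$ boxes from the $e_2$ factors (which can always be arranged as $|L|$ columns of height $2$, occupying $|L|$ columns), the remaining horizontal strips of sizes $d_1,\ldots,d_r$ must be packable into a $2 \times (k-|L|)$ region with the constraint that an $h_{d_i}$-strip occupies at least $\lceil d_i / 1\rceil$... — more precisely each single $h_{d_i}$ is a horizontal strip so it lies in one row unless we combine strips across rows via the LR rule — I would show that the minimal number of columns needed is $\max(|L| + \max_i d_i, \lceil c(\N)/2 \rceil)$ when $a := k-|L| \ge 0$, which is $\le k$ exactly when $d_i \le a$ for all $i$ (condition (c)) together with (a); condition (b) $a \ge 0$ is just the requirement that $|L| \le k$, needed for any $\mu \subseteq (k)^2$ to even contain the $e_2^{|L|}$ contribution. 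I would then check that conditions (a)--(c) hold simultaneously with $\N \notin P_{n,k}$ precisely on the residual arrangement: by \cref{thm:Pnk}(2), $\N \in P_{n,k}$ iff $e(\N) = r + k - |S \cup L| \ge 0$, and one verifies $e(\N) < 0$ is compatible with (a)--(c) — indeed when all $d_i \le a$ and $c(\N) \le 2k$, one can have $e(\N) < 0$, giving genuinely new strata not already faces.

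**The main obstacle** I expect is the precise "packing" combinatorics: showing that the minimum over all $\mu$ appearing in $e_2^{|L|}\prod h_{d_i}$ of (number of columns of $\mu$) equals $\max(|L| + \max_i d_i,\ \lceil c(\N)/2\rceil)$, and the analogous statement for rows, so that fitting in $(k)^2$ reduces cleanly to (a) and (c). The subtlety is that the Littlewood--Richardson rule allows the horizontal strips $h_{d_i}$ to interact — a long strip $h_{d_i}$ with $d_i > a$ cannot be "bent" into the second row in a way that saves columns, because $h_{d_i} \cdot h_{d_j}$ only produces partitions with first part $\ge \max(d_i,d_j)$; iterating, the product $\prod h_{d_i}$ has all its constituents with first part $\ge \max_i d_i$, and multiplying by $e_2^{|L|}$ raises this to $\ge |L| + \max_i d_i$ for the constituent that also has $\le 2$ rows (one must check the $e_2$'s stack as full columns in the minimal case). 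I would make this rigorous by an explicit construction achieving the bound (stack the $e_2$'s as $|L|$ height-$2$ columns, then lay the strips $h_{d_i}$ alternately in the two rows of the remaining region, which is possible iff each $d_i \le a$ and the total fits, i.e. (a)) together with the lower bound from the LR first-part estimate just described. The remaining verifications — matching against \cref{prop:LamAS}'s notation, the zero-dimensional edge case, and confirming $e(\N)<0$ is consistent with (a)--(c) — are routine.
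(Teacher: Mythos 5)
Your proposal is correct and follows essentially the same route as the paper: identify nonemptiness of $\Pi_\N \cap \Gr(2,\sp(Z))$ with the existence of a Schur constituent of $e_2^{|L|}\prod_i h_{d_i}$ fitting inside a $2\times k$ rectangle, then translate this via the Pieri/Littlewood--Richardson rule into conditions (a)--(c). The paper phrases the packing step more tersely (the $2$-row part of $e_2^{|L|}$ is the single constituent $s_{(|L|,|L|)}$, and $\prod_i h_{d_i}$ has a constituent in a $2\times a$ box iff $d_i\le a$ for all $i$ and $\sum_i d_i\le 2a$), which is equivalent to your explicit minimum-first-part formula; your final sanity check that (a)--(c) are compatible with $e(\N)<0$ is fine but not needed for the proof.
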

\begin{proof}
As in the proof of \cref{thm:semi}(1), we have that $\Pi_\N$ belongs to the the residual arrangement if and only if the symmetric function \eqref{eq:PiN}, when expanded into Schur functions, contains a Schur function that fits inside a $2 \times k$ rectangle.  The conditions (a) and (b) follow immediately, and for (c) we note that $\prod_{i=1}^r h_{d_i}$ contains a Schur function that fits inside a $2 \times a$ rectangle if and only if $d_i \leq a$ for all $i$, and $\sum_{i=1}^r d_i \leq 2a$.
\end{proof}

\medskip

\noindent
{\bf Acknowledgements.} 
We acknowledge support from the National Science Foundation under DMS-2348799 and DMS-1953852 and from the Simons Foundation for a Simons Fellowship.  We thank Gabriele Dian, Paul Heslop, Tomek Lukowski, Matteo Parisi, Kristian Ranestad, and Simon Telen for comments on an earlier version of this manuscript.  We thank the referee for many helpful suggestions on this work.

\end{document}